\title[Non-Homogeneous Quasi-Minimizers on Metric Spaces]{Nonhomogeneous Variational Problems and Quasi-Minimizers on Metric Spaces}
\author{Jasun Gong}
\address{Jasun Gong, Department of Mathematics, University of Pittsburgh, Pittsburgh, PA 15260, United States}
\author{Juan J.\ Manfredi}
\address{Juan J.\ Manfredi, Department of Mathematics, University of Pittsburgh, Pittsburgh, PA 15260, United States}
\author{Mikko Parviainen}
\address{Mikko Parviainen, Aalto University, Institute of Mathematics, P.O.\ Box 11100, FI-00076 Aalto, Finland}
\date{Monday, 30 August 2010}
\thanks{This research was partially supported by NSF Grant DMS-1001179.}
\subjclass[2010]{35J91 (35B65, 49N60)}
\theoremstyle{plain}
\newtheorem{thm}{Theorem}[section]
\newtheorem{lemma}[thm]{Lemma}
\theoremstyle{definition}
\newtheorem{defn}[thm]{Definition}
\newtheorem{eg}[thm]{Example}
\numberwithin{equation}{section}
\renewcommand{\d}{\delta}	
\renewcommand{\div}{\operatorname{div}}
\newcommand{\e}{\epsilon}
\newcommand{\F}{\mathcal{F}}
\newcommand{\N}{\mathbb{N}}
\newcommand{\osc}{\operatornamewithlimits{osc}}
\newcommand{\R}{\mathbb{R}}
\newcommand{\spt}{\operatorname{spt}}
\def\Xint#1{\mathchoice
{\XXint\displaystyle\textstyle{#1}}%
{\XXint\textstyle\scriptstyle{#1}}%
{\XXint\scriptstyle\scriptscriptstyle{#1}}%
{\XXint\scriptscriptstyle\scriptscriptstyle{#1}}%
\!\int}
\def\XXint#1#2#3{{\setbox0=\hbox{$#1{#2#3}{\int}$}
\vcenter{\hbox{$#2#3$}}\kern-.5\wd0}}
\def\dashint{\Xint-}
\newcommand{\ud}{\,d}
\newcommand{\trm}{\textrm}
\newcommand{\half}{{\frac{1}{2}}}
\newcommand{\Lip}{\operatorname{Lip}}
\newcommand{\ol}{\overline}
\def\vint_#1{\mathchoice%
          {\mathop{\kern 0.2em\vrule width 0.6em height 0.69678ex depth -0.58065ex
                  \kern -0.8em \intop}\nolimits_{\kern -0.4em#1}}%
          {\mathop{\kern 0.1em\vrule width 0.5em height 0.69678ex depth -0.60387ex
                  \kern -0.6em \intop}\nolimits_{#1}}%
          {\mathop{\kern 0.1em\vrule width 0.5em height 0.69678ex
              depth -0.60387ex
                  \kern -0.6em \intop}\nolimits_{#1}}%
          {\mathop{\kern 0.1em\vrule width 0.5em height 0.69678ex depth -0.60387ex
                  \kern -0.6em \intop}\nolimits_{#1}}}
\def\vintslides_#1{\mathchoice%
          {\mathop{\kern 0.1em\vrule width 0.5em height 0.697ex depth -0.581ex
                  \kern -0.6em \intop}\nolimits_{\kern -0.4em#1}}%
          {\mathop{\kern 0.1em\vrule width 0.3em height 0.697ex depth -0.604ex
                  \kern -0.4em \intop}\nolimits_{#1}}%
          {\mathop{\kern 0.1em\vrule width 0.3em height 0.697ex depth -0.604ex
                  \kern -0.4em \intop}\nolimits_{#1}}%
          {\mathop{\kern 0.1em\vrule width 0.3em height 0.697ex depth -0.604ex
                  \kern -0.4em \intop}\nolimits_{#1}}}
\newcommand{\kint}{\vint}
\newcommand{\Om}{\Omega}
\newcommand{\eps}{\e}
\begin{document}

\maketitle


\begin{abstract}
We show that quasi-minimizers of non-homogeneous energy functionals are 
locally H\"older continuous and satisfy the Harnack inequality on metric 
measure spaces. We assume that the space is doubling and supports a 
Poincar\'e inequality. The proof is based on the De Giorgi method, 
combined with the expansion of positivity technique.
\end{abstract}
\section{Introduction}

We study minimizers of variational problems in the setting of metric 
measure spaces.  Here the energy functional is of $p$-Laplacian type.  In 
the Euclidean setting it has the form
\begin{equation} \label{eq_pdirichlet}
\int \big(|\nabla u|^p + uF\big) \,dx
\end{equation}
with $p \in (1,\infty)$, and minimizers are solutions to the 
Euler-Lagrange equation
\begin{equation} \label{eq_plaplacian}
\div(|\nabla u|^{p-2}\nabla u) \;=\; F.
\end{equation}
As our main results, we prove local H\"older continuity and a 
Harnack-type inequality for minimizers on metric-measure spaces.  In fact 
the methods are robust enough to hold for a more general class of 
functions.  Following Giaquinta and Giusti \cite{GiaquintaGiusti82}, a 
function $u \in W^{1,p}(\R^n)$ is a quasi-minimizer if there exists $K 
\geq 1$ so that
$$
\int_\Omega \big(|\nabla u|^p + uF\big) \, dx \;\leq\;
K \int_\Omega \big(|\nabla v|^p + vF\big) \, dx
$$
holds for all $\Omega \Subset  \R^n$ and for all $v \in W^{1,p}(\Omega)$ 
with $u - v \in W^{1,p}_0(\Omega)$. When $K = 1$ these are minimizers in 
the usual sense.

The usual notion of a derivative on $\R^n$ is not well-defined on an 
arbitrary metric space.  As a replacement, we use {\em upper gradients}, 
which are defined in terms of a generalized Fundamental Theorem of 
Calculus. Under mild assumptions on a metric space, the notion of upper 
gradient gives rise to analogues of Sobolev spaces and Sobolev 
inequalities, as developed in Cheeger \cite{Cheeger}, Haj{\l}asz-Koskela 
\cite{HajlaszKoskela1}, \cite{HajlaszKoskela}, Heinonen-Koskela 
\cite{HeinonenKoskela}, Semmes \cite{Semmes}, and Shanmugalingam \cite{Shan}.
Examples include spaces of non-negative Ricci curvature \cite{Buser}, 
\cite{LottVillani}, \cite{Sturm}, Carnot groups and Carnot-Carath\'{e}dory 
spaces \cite{Gromov}, boundaries of certain hyperbolic buildings 
\cite{BourdonPajot}, and self-similar fractals \cite{Laakso}.

Our approach is a variant of De Giorgi's method, which we use to prove 
local H\"older continuity for quasi-minimizers (Theorem~\ref{thm_holder}) 
as well as a Harnack-type inequality (Theorem~\ref{thm_harnack}).  The 
proof of the Harnack inequality is based on the ``expansion of 
positivity'' technique \cite{DiBenedetto1}, 
\cite{DiBenedettoGianazzaVespri} extended to metric spaces.  
This provides an alternative to the usual Krylov-Safonov covering 
technique \cite{KrylovSafonov}.
Regarding the appearance of nonhomogeneous terms $F \in L^s(\Omega)$, 
with $s > 1$, the oscillation of a quasi-minimizer $u$ is handled in a 
standard but nontrivial manner: if the norm $\|F\|_s$ is sufficiently 
large on a ball, then the oscillation of $u$ is controlled by the measure 
of the ball; otherwise it is controlled by oscillation of $u$ on larger 
concentric balls.

In the classical setting, local H\"older continuity of quasi-minimizers 
was shown by Giaquinta and Giusti in \cite{GiaquintaGiusti84} and the 
Harnack inequality by DiBenedetto and Trudinger 
\cite{DiBenedettoTrudinger}. In the case of minimizers, this follows from 
well-known techniques of De Giorgi \cite{DeGiorgi}, Nash \cite{Nash}, and 
Moser \cite{Moser1}, \cite{Moser2}; see also Lady\v{z}henskaya and 
Ural'seva \cite{LadyUral}.
We note that H\"older continuity is the most that one can expect in this 
setting: Koskela, Rajala, and Shanmugalingam \cite[p.\ 
150]{KoskelaRajalaShan} have shown that without additional geometric 
assumptions, even minimizers on {\em closed} subsets of $\R^n$ can fail 
to be locally Lipschitz continuous.

Kinnunen and Shanmugalingam studied the case of homogeneous functionals 
of $p$-Laplacian type ($F=0$) in  \cite{KinnunenShan}.  By adapting the 
De Giorgi method to metric measure spaces, they recovered local H\"older 
continuity, the Harnack inequality, and the strong maximum principle for 
quasi-minimizers.  Later Bj\"orn and Marola \cite{BjornMarola} showed 
that the Moser iteration technique can also be adapted to the metric 
setting for minimizers.

For the non-homogeneous case, Jiang \cite{Jiang} has recently shown, when 
$p=2$ and when an additional heat kernel inequality holds, that 
minimizers are locally Lipschitz continuous.
When the data $F$ is a Radon measure and when $p > 1$, 
M{\"a}k{\"a}l{\"a}inen \cite{Makalainen_nonhom} has shown that minimizers 
are H\"older continuous if and only if the measure satisfies certain 
growth conditions on balls.
Both works rely crucially on a theorem of Cheeger \cite{Cheeger}, which 
asserts that such metric measure spaces support a generalized 
differentiable structure.  We note that our techniques are independent of 
theirs.

Our methods also apply in the setting of Cheeger differentiable 
structures \cite{Cheeger}. Indeed, the results  of this paper are applied 
in the forthcoming article \cite{GongHajlasz} to prove that 
quasi-minimizers are Cheeger differentiable almost everywhere.

The paper is organized as follows.  In Section \ref{sect_prelim}, we 
review standard results in the analysis on metric spaces.  We introduce  
quasi-minimizers on metric spaces in Section \ref{sect_qmin} and prove a 
Caccioppoli-type inequality.  In Section \ref{sect_locbdd} we prove that 
quasi-minimizers are locally bounded, which motivates our study of 
certain function classes that we call {\em De Giorgi classes}, since they
are a natural generalization of the Euclidean De Giorgi classes.  We show 
H\"older continuity and a Harnack-type inequality in Sections 
\ref{sect_holder} and \ref{sect_harnack}, respectively.

\section*{Acknowledgements}

Part of this paper was written while MP visited the University of Pittsburgh and while JG visited the Aalto University School of Science and Technology; they would like to thank both universities for the hospitality.
The authors would like to thank Professor Nageswari Shanmugalingam for many helpful conversations and suggestions that led to improvements in this work.

\section{Preliminaries} \label{sect_prelim}

\subsection{Notation}

On a metric space $X$, we write $B(x,r)$ for the ball centered at $x \in X$ with radius $r$.  If no confusion arises, we write $B_r = B(x,r)$ for short.  For real-valued functions $u$, we write
$$
u_+ = \max\{u,0\} \hspace{1em} \trm{ and } \hspace{1em} u_- = -\min\{u,0\}.
$$
The oscillation of $u$ on a set $A$ is given by
$$
\mathop{\osc}_Au = \sup_Au -\inf_Au.
$$
For $h\in \R$ and a ball $B(x,r)$, we denote the super-level set of a function $u$ by
$$
A_r(h)\;=\;\{ y\in B(x,r)\,:\,u(y) > h\}.
$$
For a function $u$ in $L^p(A)$, we write the $L^p$-norm as $\|u\|_{p,A}$, or as $\|u\|_p$ if the set $A$ is the entire domain of $u$.  As usual,
the H\"older conjugate of $p \in (1,\infty)$ is given by
$$
p' =\frac{p}{p-1}.
$$

\subsection{Doubling measures}

In what follows, a {\em metric measure space} $(X,d,\mu)$ refers to a metric space $(X,d)$ equipped with a Borel measure $\mu$ on $X$.
\begin{defn} \label{defn_doubling}
Let ${c_\mu} \geq 1$.  A Borel measure $\mu$ on $X$ is said to be {\em doubling} if every ball $B(x,r)$ in $X$
has positive, finite $\mu$-measure and
\begin{equation} \label{eq_doubling}
\mu\big(B(x,2r)\big) \;\leq\; {c_\mu} \, \mu\big( B(x,r) \big).
\end{equation}
\end{defn}

The doubling exponent $Q := \log_2({c_\mu})$ plays the analogous role of 
dimension on metric measure spaces.  In particular, for $p \in (1,Q)$ we 
define the (Sobolev) conjugate exponents as
$$
p^* \;=\; \frac{Qp}{Q-p}.
$$
For {\em connected} metric spaces, the doubling property 
\eqref{eq_doubling} implies that locally the $\mu$-measures of balls are 
controlled by powers of their radii.  The lemma below is well-known.  
The first item is \cite[Lemma 4.7]{Hajlasz} and for completeness, we prove 
the second item.

\begin{lemma} \label{lemma_lowerahlforsreg}
Let $X$ be a metric space, let $\mu$ a doubling measure on $X$, and let $Q$ be the doubling exponent of $\mu$.  For each ball $B_0 = B(x_0,r_0)$ in $X$, with $0 < r_0 < \infty$,
\begin{enumerate}
\item there exists $c = c({c_\mu},B_0) > 0$ so that for all $x \in B_0$ and all $r \in (0,r_0)$ we have the inequality
$$
c\Big( \frac{r}{r_0} \Big)^Q \;\leq\;
\frac{\mu(B(x,r))}{\mu(B_0)},
$$
\item if $X$ is path-connected, then there exist constants $c = c({c_\mu},B_0) > 0$ and  $Q' = Q'({c_\mu},B_0) > 0$ so that for all $x \in B_0$ and all $r \in (0,r_0)$ we have the inequality
$$
\frac{\mu(B(x,r))}{\mu(B_0)} \;\leq\; c\Big( \frac{r}{r_0} \Big)^{Q'}.
$$
\end{enumerate}
\end{lemma}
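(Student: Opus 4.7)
The plan is to establish a reverse doubling inequality of the form $\mu(B(x,3s)) \geq \lambda \mu(B(x,s))$ for some $\lambda > 1$ depending only on $c_\mu$, and then iterate it downward from the scale $r_0$ to produce the claimed power law.

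The key geometric input is path-connectedness. Fix $x \in B_0$; for any $s > 0$ small enough that there exists a point in $X$ at distance at least $2s$ from $x$, a continuous path from $x$ to such a point combined with the intermediate value theorem applied to $t \mapsto d(x, \gamma(t))$ produces a point $y \in X$ with $d(x,y) = 2s$. Then $B(x,s)$ and $B(y,s)$ are disjoint subsets of $B(x,3s)$, and since $B(x,s) \subset B(y,3s)$, applying \eqref{eq_doubling} twice yields $\mu(B(x,s)) \leq c_\mu^2 \mu(B(y,s))$. Combining these observations gives
$$
\mu(B(x,3s)) \;\geq\; \mu(B(x,s)) + \mu(B(y,s)) \;\geq\; \bigl(1 + c_\mu^{-2}\bigr)\,\mu(B(x,s)),
$$
so $\lambda := 1 + c_\mu^{-2} > 1$ is a valid reverse doubling constant.

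I would then iterate. For $r \in (0,r_0)$ let $k \in \N$ be maximal with $3^k r \leq r_0$, so that $3^{k+1} r > r_0$ and hence $k > \log_3(r_0/r) - 1$. Repeated application of the reverse doubling inequality gives $\mu(B(x,r)) \leq \lambda^{-k} \mu(B(x,3^k r)) \leq \lambda^{-k} \mu(B(x,r_0))$. Since $B(x,r_0) \subset B(x_0, 2r_0)$, doubling yields $\mu(B(x,r_0)) \leq c_\mu \mu(B_0)$; and with $Q' := \log_3 \lambda > 0$, the bound on $k$ gives $\lambda^{-k} \leq \lambda (r/r_0)^{Q'}$. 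Combining these ingredients produces the claim with $c = c_\mu \lambda$ and $Q' = \log_3(1 + c_\mu^{-2})$.

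The main obstacle is that the geometric step requires $X$ to contain a point at distance at least $2s$ from $x$ at every scale encountered in the iteration, which in principle demands $\diam(X) > 2r_0$. When this fails, one must terminate the iteration at the largest scale $\rho_0 = \rho_0(B_0) \leq r_0$ at which path-connectedness produces the needed auxiliary point for every $x \in B_0$. The residual ratio $\mu(B(x,\rho_0))/\mu(B_0)$ is then bounded above by a $B_0$-dependent constant via iterated use of \eqref{eq_doubling} along a covering of $B_0$ by balls of radius $\rho_0/2$. This extra factor is absorbed into $c$, at the possible cost of a smaller exponent $Q'$, both of which are permitted to depend on $B_0$.
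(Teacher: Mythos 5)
Your argument is correct and follows essentially the same strategy as the paper's: use path-connectedness to produce a reverse-doubling inequality at a fixed scale ratio, iterate it from scale $r$ up to the top scale, and then compare $\mu(B(x,r_0))$ with $\mu(B_0)$ by one application of doubling. The only real difference is the mechanism for reverse doubling: you place two disjoint balls $B(x,s)$ and $B(y,s)$, with $d(x,y)=2s$, inside $B(x,3s)$ and use $\mu(B(x,s))\le c_\mu^2\,\mu(B(y,s))$ to get the factor $1+c_\mu^{-2}$ at ratio $3$, whereas the paper picks $z\in\partial B(x,\tfrac{3}{7}r)$ and subtracts an inner ball, obtaining $\mu(\tfrac{1}{7}B)\le \mu(B')-\mu(\tfrac{1}{2}B')\le (1-c_\mu^{-1})\mu(B)$ at ratio $7$; both are standard variants and both yield a $Q'$ depending only on $c_\mu$ (and $B_0$). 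A point in your favor: you explicitly confront the case where no point at distance $2s$ from $x$ exists, which the paper's assertion that the sphere $\partial B(x,\tfrac{3}{7}r)$ is nonempty silently assumes; your fix works (take $\rho_0$ comparable to $\min\{r_0,\diam X\}$, note that for every $x$ there is a point at distance at least $\tfrac{1}{2}\diam X$), though it is simpler than you suggest --- the residual ratio is bounded by $\mu(B(x,\rho_0))/\mu(B_0)\le c_\mu$ directly, no covering is needed and the exponent $Q'$ need not shrink --- and, like the paper, it tacitly requires $X$ to have more than one point, without which the statement itself fails.
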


\begin{proof}[Proof of (2)]
Fix a ball $B = B(x,r)$ in $X$.  Since $X$ is path-connected, the sphere $\partial B(x,\frac{3}{7}r)$ is nonempty, so let $z \in  B$ be a point in $\partial B(x,\frac{3}{7}r)$ and consider the ball $B' = B(z,\frac{4}{7}r)$.  Clearly $B' \subset B$ and $\frac{1}{7}B \subset B' \setminus \frac{1}{2}B'$, which imply
\[
\begin{split}
\frac{1}{c_\mu} \mu(B') \leq
\mu\Big(\frac{1}{2}B'\Big)
\end{split}
\]
and as a result,
\[
\begin{split}
\mu\Big(\frac{1}{7}B\Big) \;\leq\;
\mu(B')-\mu\Big(\half B'\Big)
\;\leq
(1-\frac{1}{c_\mu}) \, \mu(B')
\;\leq (1-\frac{1}{c_\mu}) \, \mu(B).
\end{split}
\]
Now an iteration gives us
\[
\mu(B(x,r))\leq (1-\frac{1}{c_\mu})^i \mu(B(x,r_0)),
\]
where $7^{i}r \leq r_0\leq 7^{i+1} r$.
A substitution for $i$ and an application of the doubling condition finishes the proof.
\end{proof}

\subsection{Newtonian-Sobolev spaces and Poincar\'e inequalities} \label{sect_standhyp}

To define Sobolev spaces on metric measure spaces, we use  {\em weak 
upper gradients}, which are defined in terms of line integrals and a 
generalized Fundamental Theorem of Calculus.  This in turn requires a 
tool to measure the size of families of rectifiable curves.

To this end let $\Gamma$ be a collection of non-constant rectifiable curves on $X$.  For $p \geq 1$, the {\em $p$-modulus} of $\Gamma$ is defined as
$$
{\rm mod}_p(\Gamma) \;:=\; \inf_\rho \int_X \rho^p \,d\mu
$$
where $\rho : X \to [0,\infty]$ is any Borel function that satisfies 
$\int_\gamma \rho \,dx \geq 1$.  (We follow the convention that 
$\inf\emptyset = \infty$.)
The $p$-modulus is an outer measure on $\mathcal{M}$, the family of all rectifiable curves on $X$; for details, see for example \cite[Chap 7]{Heinonen}.

\begin{defn} \label{defn_wkupgrad}
For a function $u : X \to \R$, we say that a Borel function $g : X \to 
[0,\infty]$ is an {\em upper gradient} of $u$ if the inequality
\begin{equation} \label{eq_upgrad}
|u(\gamma(b)) - u(\gamma(a))| \;\leq\; \int_\gamma g \,ds
\end{equation}
holds for every rectifiable curve $\gamma : [a,b] \to X$ under its 
arc-length parametrization.

We say that $g : X \to [0,\infty]$ is a {\em weak upper gradient} of $u$ if Equation \eqref{eq_upgrad} holds for $p$-modulus a.e.\ curve $\gamma \in \mathcal{M}$ --- that is, if $\Gamma$ is the subcollection of curves in $\mathcal{M}$ for which Equation \eqref{eq_upgrad} fails, then ${\rm mod}_p(\Gamma) = 0$.
\end{defn}

\begin{eg}
Let $u: X \to \R$ be a {\em Lipschitz  function} -- that is, it satisfies
$$
\Lip(u) \;:=\; \sup\left\{ \frac{|f(x)-f(y)|}{d(x,y)} \,:\, x, y \in X,\; x \neq y \right\} \:<\; \infty
$$
then $\Lip(u)$, called the {\em Lipschitz constant} of $u$, is an upper gradient of $u$.
\end{eg}

We now define an analogue of the Sobolev space $W^{1,p}(\R^n)$ on metric spaces.

\begin{defn} \label{defn_newtonianspace}
Let $p \geq 1$.  We say that a function $u : X \to \R$ lies in $\tilde{N}^{1,p}(X)$ if and only if $u \in L^p(X)$ and the quantity
$$
\|u\|_{1,p} \;:=\;
\|u\|_p \;+\;
\inf_g \|g\|_p
$$
is finite, where the infimum is taken over all weak upper gradients $g$ of $u$.

The {\em Newtonian space} $N^{1,p}(X)$ consists of equivalence classes of functions in $\tilde{N}^{1,p}(X)$.  Here, two functions $u, v \in \tilde{N}^{1,p}(X)$ are {\em equivalent} if $u = v$ $\mu$-a.e.
\end{defn}

We note that $\| \cdot \|_{1,p}$ is a norm and $N^{1,p}(X)$ is a Banach space with respect to this norm \cite[Thm 3.7]{Shan}.  Moreover, for each $u \in N^{1,p}(X)$, there exists a weak upper gradient $g_u$ so that the infimum in $\|u\|_{1,p}$ is attained \cite[Thm 7.16]{Hajlasz}.
We call $g_u$ the {\em minimal upper gradient} of $u$, which is uniquely determined $\mu$-a.e.

A Leibniz product rule holds for upper gradients \cite[Lemma 2.14]{Shan2}.

\begin{lemma} \label{lemma_leibniz}
If $u \in N^{1,p}(X)$ and if $f : X \to \R$ is a bounded Lipschitz function, then $u \cdot f \in N^{1,p}(X)$ and its minimal upper gradient satisfy
$$
g_{u \cdot f} \;\leq\; g_u \, |f| + |u| \, \Lip(f).
$$
\end{lemma}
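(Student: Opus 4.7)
The plan is to exhibit $g := g_u |f| + |u|\,\Lip(f)$ as a weak upper gradient of $uf$ and to observe that membership in $N^{1,p}(X)$ then follows from the fact that both $u$ and $g_u$ lie in $L^p(X)$ and $f$ is bounded. The product $uf$ is in $L^p(X)$ because $\|uf\|_p \leq \|f\|_\infty \|u\|_p < \infty$, and Minkowski gives
\[
\|g\|_p \;\leq\; \|f\|_\infty \, \|g_u\|_p \;+\; \Lip(f)\, \|u\|_p \;<\; \infty,
\]
so the only substantive point is the upper gradient inequality \eqref{eq_upgrad} for $uf$ on $p$-modulus a.e.\ rectifiable curve.

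For the upper gradient inequality, I would fix a rectifiable curve $\gamma : [0,\ell] \to X$ parametrized by arc length and belonging to the exceptional set complement on which $g_u$ satisfies \eqref{eq_upgrad} along $\gamma$ \emph{and along all of its subcurves}. A standard $p$-modulus argument (for instance, as in \cite[Prop.~3.1]{Shan}) ensures that this full set of good curves still has zero exceptional $p$-modulus, so that $u \circ \gamma$ is absolutely continuous on $[0,\ell]$ with $|(u \circ \gamma)'(t)| \leq g_u(\gamma(t))$ for a.e.\ $t$. Since $f$ is $\Lip(f)$-Lipschitz and $\gamma$ is arc-length parametrized, $f \circ \gamma$ is $\Lip(f)$-Lipschitz on $[0,\ell]$.

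The product of an absolutely continuous function and a bounded Lipschitz function is absolutely continuous, so $(uf) \circ \gamma$ is absolutely continuous on $[0,\ell]$ and the classical product rule yields, a.e.\ in $t$,
\[
\bigl|\bigl((uf)\circ\gamma\bigr)'(t)\bigr| \;\leq\; |f(\gamma(t))|\, g_u(\gamma(t)) \;+\; |u(\gamma(t))|\, \Lip(f) \;=\; g(\gamma(t)).
\]
The fundamental theorem of calculus for absolutely continuous functions then gives
\[
|u(\gamma(\ell))f(\gamma(\ell)) - u(\gamma(0))f(\gamma(0))| \;\leq\; \int_0^\ell g(\gamma(t))\,dt \;=\; \int_\gamma g\,ds,
\]
which is exactly \eqref{eq_upgrad} for $uf$ with upper gradient $g$.

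The main obstacle is the technical point that the upper gradient inequality for $u$ must be available not just on $\gamma$ itself but on every subcurve, in order to upgrade the endpoint inequality into absolute continuity of $u \circ \gamma$; this is a well-known consequence of the $p$-modulus being an outer measure on $\mathcal M$, and if desired one could alternatively bypass it by taking a partition $0 = t_0 < t_1 < \cdots < t_N = \ell$, applying the endpoint inequality separately to each subarc (together with the Lipschitz estimate for $f$), summing, and passing to the limit as the mesh tends to zero to recover the integral bound. Either route is routine once the a.e.~subcurve statement is in hand, and no further ingredients beyond Definition~\ref{defn_wkupgrad} and the standard theory of $N^{1,p}$ are needed.
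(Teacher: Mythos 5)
Your argument is correct, but note that the paper does not prove this lemma at all: it is quoted directly from the literature (\cite[Lemma 2.14]{Shan2}), so there is no ``paper proof'' to compare against. What you have written is essentially the standard argument behind that citation: exhibit $g := g_u\,|f| + |u|\,\Lip(f)$ as a $p$-integrable weak upper gradient of $uf$ by working on $p$-modulus a.e.\ curve, using absolute continuity of $u\circ\gamma$ and the Lipschitz property of $f\circ\gamma$ together with the product rule, and then conclude membership in $N^{1,p}(X)$ from $\|uf\|_p \le \|f\|_\infty\|u\|_p$ and $\|g\|_p < \infty$. Two small points you pass over, both standard: (i) to get absolute continuity of $u\circ\gamma$ (or to pass to the limit in your partition variant) you also need $\int_\gamma g_u\,ds < \infty$, which holds for $p$-a.e.\ curve because $g_u \in L^p$; and the subcurve reduction you invoke follows from admissibility (any $\rho$ admissible for the exceptional family is admissible for the family of curves containing an exceptional subcurve) rather than from outer measurability per se; (ii) to pass from ``$g$ is a weak upper gradient of $uf$'' to the stated inequality $g_{uf} \le g$, you need the fact that the minimal upper gradient is pointwise $\mu$-a.e.\ minimal among $p$-integrable weak upper gradients, not merely norm-minimizing as stated after Definition \ref{defn_newtonianspace}; this is likewise standard. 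With those two remarks supplied, your proof is complete and self-contained, which is more than the paper provides.
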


We now formulate Poincar\'e inequalities in terms of weak upper gradients.  Together with the doubling property \eqref{eq_doubling}, such inequalities determine a rich theory of first-order calculus on the underlying spaces.

\begin{defn}
\label{defn_PI}
We say that a metric measure space $(X,d,\mu)$ supports a {\em (weak)\footnote{Here ``weak'' refers to the possibility that $\Lambda > 1$.} $(1,p)$-Poincar\'e inequality} if there exist $C \geq 0$, $\Lambda \geq 1$ so that
\begin{equation} \label{eq_PI}
\dashint_B |u - u_B| \,d\mu \;\leq\;
C \, r \, \Big( \dashint_{\Lambda B} g_u^p \,d\mu \Big)^\frac{1}{p}
\end{equation}
holds for all $u \in N^{1,p}_{\rm loc}(X)$ and for all balls $B$ in $X$.
\end{defn}

\theoremstyle{plain}
\newtheorem{stand}[thm]{Standing Hypotheses}
\begin{stand} \label{standhyp}
We will always assume that a metric space $(X,d)$ is equipped with a doubling measure $\mu$ and supports a (weak) $(1,p)$-Poincar\'e inequality, for some $p \in (1,Q)$; that is, Equations \eqref{eq_doubling} and \eqref{eq_PI} hold under some choice of constants ${c_\mu}, \Lambda \geq 1$ and $C > 0$.  Our main results are local in nature, so for simplicity we will work with bounded domains $\Om$ in $X$.

\end{stand}

Note that, if $(X,d,\mu)$ satisfies Standing Hypotheses \ref{standhyp}, then for $q > Q$ an analogue of Morrey's inequality holds \cite[Thm 5.1]{HajlaszKoskela}, so functions in $N^{1,q}(X)$ are already locally H\"older continuous in this case.
Note also that such spaces $X$ are {\em $c$-quasiconvex}; that is, every pair of points $x,y \in X$ can be joined by a curve in $X$ whose length is at most $c \cdot d(x,y)$.  Here $c > 0$ depends only on the parameters of the hypotheses, see \cite{DavidSemmes} and also \cite[Sect 17]{Cheeger}.
In particular, such spaces are path-connected, so the estimates of Lemma \ref{lemma_lowerahlforsreg} apply to balls in $X$.

In the same setting, Keith and Zhong \cite[Thm 1.0.1]{KeithZhong} showed that a (weak) $(1,p)$-Poincar\'e inequality for Lipschitz functions on $X$ is an open-ended condition in the
exponent $p$.  Moreover, for such spaces $X$, it is known that Lipschitz functions are dense in $N^{1,p}(X)$ \cite{Shan}.  This leads to the following theorem.

\begin{thm}[Keith-Zhong] \label{thm_openended}
If $(X,d,\mu)$ supports a (weak) $(1,p)$-Poincar\'e inequality, then there exists $\e > 0$ so that for all $q > p -\e$,
there exist $C > 0$ and $\Lambda \geq 1$ so that, for all $u \in N^{1,p}_{\rm loc}(X)$, we have
$$
\dashint_B |u - u_B| \,d\mu \;\leq\;
C \, r \, \Big( \dashint_{\Lambda B} g_u^q \,d\mu \Big)^\frac{1}{q}
$$
\end{thm}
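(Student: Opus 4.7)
The plan is to establish the self-improvement in $p$ via a Gehring-style reverse H\"older argument applied to weak upper gradients, with the key technical input being a truncation version of the Poincar\'e inequality. I would first reduce to the dense subclass of Lipschitz functions in $N^{1,p}_{\rm loc}(X)$ (citing the density result already invoked above) and reformulate \eqref{eq_PI} pointwise. A standard chaining between concentric dyadic balls around a Lebesgue point $x \in B$ gives
\[
|u(x) - u_B| \;\leq\; C\, r\, M_{\Lambda B,p}(g_u)(x),
\]
where $M_{\Lambda B,p}$ is a suitable restricted fractional maximal operator; by the $L^p$-boundedness of $M_{\Lambda B,p}$ under doubling, this recovers \eqref{eq_PI} but in a form better suited to truncation.

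The heart of the argument, and the main obstacle, is a truncation principle: for each $\lambda > 0$ one needs a Lipschitz replacement $u_\lambda$ that agrees with $u$ on the good set $\{M g_u \leq \lambda\}$, has Lipschitz constant at most $C\lambda$, and whose minimal upper gradient satisfies
\[
g_{u_\lambda} \;\leq\; g_u \, \chi_{\{M g_u \leq \lambda\}} \;+\; C \lambda\, \chi_{\{M g_u > \lambda\}}.
\]
In the Euclidean setting one obtains this from the linear identity $\nabla \min(u,\lambda) = \chi_{\{u < \lambda\}}\nabla u$, which is unavailable for upper gradients. I would instead use the $c$-quasiconvexity of $X$ (implied by the Standing Hypotheses) to build $u_\lambda$ via a McShane-type extension off the good set, and control its upper gradient by telescoping along chains of balls. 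Substituting $u_\lambda$ into \eqref{eq_PI} and estimating $u - u_\lambda$ through the weak-type bound on $M g_u$ then yields a good-$\lambda$ stopping-time inequality for the level sets of $M g_u$.

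Once the good-$\lambda$ inequality is in hand, a standard Gehring--Giaquinta--Modica iteration upgrades the $L^p$ integrability of $M g_u$ to $L^{p-\e}$ for some $\e = \e({c_\mu}, C, \Lambda) > 0$; combining this with the pointwise inequality applied to $u_\lambda$ and letting $\lambda \to \infty$ produces the claimed $(1,q)$-Poincar\'e inequality for every $q > p - \e$, with new constants $C'$ and $\Lambda'$ that absorb the factors arising from chaining and the McShane extension. The delicate point throughout is quantifying the constants in the truncation step tightly enough that the good-$\lambda$ error can be absorbed by the Gehring iteration; this is precisely what restricts the improvement to a narrow window $q > p - \e$ rather than all $q > 1$, and is where the metric nature of the space makes the proof genuinely harder than its Euclidean analogue.
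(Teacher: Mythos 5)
The statement you are trying to prove is not proved in the paper at all: the authors obtain it by quoting Keith--Zhong's deep theorem (that the $(1,p)$-Poincar\'e inequality for Lipschitz functions is an open-ended condition in $p$) and then passing from Lipschitz functions to all of $N^{1,p}_{\rm loc}(X)$ by the density of Lipschitz functions in $N^{1,p}(X)$. Your proposal instead attempts to reprove the Keith--Zhong theorem itself, and as written it has a genuine gap at precisely the hard point. Lowering the exponent on the \emph{right-hand side} of \eqref{eq_PI} from $p$ to $q<p$ makes the inequality strictly stronger (by H\"older the $L^q$-average of $g_u$ is smaller than the $L^p$-average), so it cannot be reached by ``upgrading the $L^p$ integrability of $Mg_u$ to $L^{p-\e}$'' --- on a ball of finite measure that containment is trivial and carries no information. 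A Gehring--Giaquinta--Modica iteration improves integrability \emph{upward} for a function satisfying a reverse H\"older inequality, and the Poincar\'e inequality is not a reverse H\"older inequality for $g_u$: its left-hand side involves $u$, not $g_u$, and moreover for $q<p$ one must contend with the fact that the minimal $q$-weak upper gradient of $u$ may differ from $g_u$, an issue your good-$\lambda$ scheme never addresses.

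The ingredients you do assemble (chaining to a restricted maximal function, Lipschitz truncation on the good set $\{Mg_u\le\lambda\}$ via a McShane-type extension using quasiconvexity) are real tools, but they are the standard route to improving the \emph{left-hand} exponent --- i.e.\ to Sobolev--Poincar\'e inequalities of the type recorded in Lemma \ref{lemma_SPI} --- not the right-hand one. The actual proof of open-endedness (Keith--Zhong, \emph{Ann.\ of Math.}\ 2008) is a long and delicate self-improvement argument on level sets of maximal functions, and it uses completeness/properness of the space in an essential way, a hypothesis your sketch never invokes. Since you are writing a proof for a theorem that the paper legitimately treats as an external citation, the efficient fix is to do what the paper does: cite Keith--Zhong for Lipschitz $u$ and conclude for general $u\in N^{1,p}_{\rm loc}(X)$ by density of Lipschitz functions, rather than attempt to compress a thirty-page argument into a Gehring iteration that does not run in the required direction.
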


As a consequence, we recover a version of the Sobolev embedding theorem; see 
\cite[Eq (2.11)]{KinnunenShan}.

\begin{lemma} \label{lemma_SPI}
Let $(X,d,\mu)$ be a metric measure space that supports a $(1,p)$-Poincar\'e inequality and where $\mu$ is doubling.
For $p < Q$, for $\e > 0$ as in Theorem \ref{thm_openended}, and for $p-\e < q < p$, there exist $c > 0$ and $\Lambda \geq 1$ so that the inequality
$$
\Big( \dashint_{B} |u|^t \,d\mu \Big)^\frac{1}{t} \;\leq\;
c \, r\, \Big( \dashint_{\Lambda B} g_u^q \, d\mu \Big)^\frac{1}{q}
$$
holds for all balls $B$ with $3B \subset X$, all $t \in [1,q^*]$, and all $u \in N^{1,p}_0(B)$.
\end{lemma}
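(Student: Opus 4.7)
The plan is to combine the self-improvement of the Poincar\'e inequality (Theorem~\ref{thm_openended}) with the standard truncation bootstrap to obtain a $(q^*,q)$-Sobolev--Poincar\'e inequality, and then exploit the fact that $u$ vanishes outside $B$ to absorb the mean value term.

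First I would lower the exponent: since $p - \e < q < p$, Theorem~\ref{thm_openended} yields a weak $(1,q)$-Poincar\'e inequality with some dilation constant $\Lambda_0$. Using the doubling property together with the lower mass bound of Lemma~\ref{lemma_lowerahlforsreg}(1) (which supplies the dimensional exponent $Q$ appearing in $q^* = Qq/(Q-q)$), this can be upgraded to a $(q^*,q)$-Sobolev--Poincar\'e inequality on balls $B'$ with $\Lambda_1 B' \subset X$, namely
\[
\Big(\dashint_{B'} |v - v_{B'}|^{q^*} \,d\mu\Big)^{1/q^*} \;\leq\; C_1 \, r(B') \Big(\dashint_{\Lambda_1 B'} g_v^q \,d\mu\Big)^{1/q}.
\]
The upgrade is the standard Haj\l{}asz--Koskela truncation argument: apply the $(1,q)$-Poincar\'e inequality to the truncations $(v - v_{B'})_\pm$ cut at dyadic levels $2^k$ and sum the resulting geometric series, using the lower mass bound to convert the dimensional factor into $q^*$.

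Next I would extend $u$ by zero outside $B$; this is legitimate because $u \in N^{1,p}_0(B)$, and the zero extension has the same minimal upper gradient off $B$. Choose $\alpha \in (1, 3/\Lambda_1]$ and set $\Lambda := \alpha \Lambda_1 \leq 3$. Applying the Sobolev--Poincar\'e estimate of the previous step to $B' = \alpha B$ (for which $\Lambda_1 B' = \Lambda B \subset 3B \subset X$) controls $\dashint_{\alpha B} |u - u_{\alpha B}|^{q^*}$ by the right-hand side of the desired inequality. To dispose of $u_{\alpha B}$, I use that $u \equiv 0$ on $\alpha B \setminus B$: since $X$ is path-connected (by the quasiconvexity noted after Standing Hypotheses~\ref{standhyp}), Lemma~\ref{lemma_lowerahlforsreg}(2) yields a reverse doubling estimate, so $\mu(\alpha B \setminus B) \geq \eta\, \mu(\alpha B)$ for a structural constant $\eta > 0$. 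Then
\[
|u_{\alpha B}|^{q^*}\, \mu(\alpha B \setminus B) \;=\; \int_{\alpha B \setminus B} |u_{\alpha B} - u|^{q^*} \,d\mu \;\leq\; \int_{\alpha B} |u - u_{\alpha B}|^{q^*} \,d\mu,
\]
and a triangle inequality absorbs $u_{\alpha B}$ into the Sobolev--Poincar\'e estimate. Restricting the integral from $\alpha B$ to $B$ costs a factor of $\mu(\alpha B)/\mu(B)$, which is controlled by the doubling constant, yielding the case $t = q^*$. For general $t \in [1,q^*]$, H\"older's inequality gives $(\dashint_B |u|^t\,d\mu)^{1/t} \leq (\dashint_B |u|^{q^*}\,d\mu)^{1/q^*}$.

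The main technical step is the truncation bootstrap from $(1,q)$ to $(q^*,q)$, which requires careful tracking of geometric constants on metric spaces; this is now a standard fact in the theory of Newtonian spaces, due essentially to Haj\l{}asz--Koskela. The remaining arguments are elementary once the reverse doubling afforded by path-connectivity is available.
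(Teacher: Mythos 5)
Your argument is essentially correct, and it is worth noting that the paper itself gives no proof of this lemma: it simply cites \cite[Eq.\ (2.11)]{KinnunenShan}, whose proof is exactly the route you take (Keith--Zhong self-improvement to a $(1,q)$-Poincar\'e inequality, the Haj{\l}asz--Koskela truncation bootstrap up to the exponent $q^*=Qq/(Q-q)$ furnished by the lower mass bound of Lemma~\ref{lemma_lowerahlforsreg}(1), zero extension of $u\in N^{1,p}_0(B)$, and removal of the mean via the measure of the annulus where $u$ vanishes). So you have supplied the standard argument that the paper outsources to the literature. One quantitative point needs fixing: you insist on $\alpha\in(1,3/\Lambda_1]$ so that $\Lambda=\alpha\Lambda_1\le 3$, but nothing in the statement requires $\Lambda\le 3$ (and $\Lambda_1$ from Keith--Zhong need not be smaller than $3$), while your annulus estimate $\mu(\alpha B\setminus B)\ge\eta\,\mu(\alpha B)$ does \emph{not} follow from Lemma~\ref{lemma_lowerahlforsreg}(2) for $\alpha$ close to $1$, since the constant $c$ there can exceed $1$ and one only gets $\mu(B)/\mu(\alpha B)\le c\,\alpha^{-Q'}$. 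The two constraints you impose can therefore be incompatible. The fix is trivial: drop the requirement $\Lambda\le3$ and take $\alpha$ a fixed structural constant large enough that $c\,\alpha^{-Q'}\le\tfrac12$ (for instance, the connectivity argument in the proof of Lemma~\ref{lemma_lowerahlforsreg}(2) gives directly $\mu(B)\le(1-c_\mu^{-1})\,\mu(7B)$, so $\alpha=7$ works with $\eta=c_\mu^{-1}$); enlarging the dilation only changes constants via doubling, and since $g_u$ vanishes off $B$ the right-hand side average over the larger ball is harmless. With that adjustment your proof is complete and matches the cited argument.
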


\section{Quasi-minimizers and Caccioppoli-type Inequalities} \label{sect_qmin}

Using the notions of  (minimal) upper gradients, 
we now define quasi-minimizers as in\cite[Chap 6]{Giusti}.
Let $\F_0 : \Omega \times \R \times \R \to \R$ and $\Omega' \Subset  \Omega$, and consider the induced ``$p$-energy''
functional on  $N^{1,p}_{\rm loc}(\Omega)$ given by
\begin{equation} \label{eq_fnl}
\F(u;\Omega') \;:=\; \int_{\Omega'} \F_0\big(x,u(x),g_u(x)\big) \,d\mu(x).
\end{equation}

\newtheorem{struct}[thm]{Structure Conditions}
\begin{struct} \label{structcond}
Here and in later sections, we will assume that $\F_0$ satisfies the inequalities
\begin{equation} \label{eq_structcond}
|z|^p - f_1(x) \, |u|^p - f_0(x) \;\leq\;
\F_0(x,u,z) \;\leq\;
L|z|^p + f_1(x) \, |u|^p + f_0(x)
\end{equation}
for $L \geq 1$ and $f_0, f_1 \in L^s(\Omega)$, where $s > \frac{Q}{p} > 1$.
Moreover, we write
$$
\d \;:=\; \frac{p}{Q} - \frac{1}{s}.
$$
\end{struct}
Note that the $p$-Laplacian functional from \eqref{eq_pdirichlet} also satisfies Structure Conditions \ref{structcond}.  Indeed, from the elementary inequality
$t \leq t^p +1$
for $t \geq 0$, we see that  \eqref{eq_structcond} follows from the choices $f_0=f_1=|F|$.

\begin{defn} \label{defn_qmin}
We say that $u \in N^{1,p}_{\rm loc}(\Omega)$ is a {\em $K$-quasi-minimizer} if there exists $K \geq 1$ so that the inequality
\begin{equation} \label{eq_qmin}
\mathcal{F}\big(u;\Omega' \cap \{u \neq v\}\big) \;\leq\;
K \, \mathcal{F}\big(v;\Omega' \cap \{ u \neq v \}\big)
\end{equation}
holds for all $v \in N^{1,p}(\Om')$ with $u - v \in N^{1,p}_0(\Om')$, where $\Omega'\Subset \Omega$.  If $K = 1$, then $u$ is
called a {\em minimizer} of $\mathcal{F}$.
\end{defn}

As in the case of Euclidean spaces, quasi-minimizers satisfy a Caccioppoli-type inequality.  Again, we assume that Standing Hypotheses \ref{standhyp} and Structure Conditions \ref{structcond} are in force, and denote level sets by
$$
A_r(h) := \{x \in B_r : u(x) > h \} \,\textrm{ and }\,
D_r(h) := \{x \in B_r : u(x) < h \}.
$$

\begin{lemma}[Caccioppoli inequality] \label{lemma_caccioppoli}
Let $K \geq 1$. 
For each $K$-quasi-minimizer $u \in N^{1,p}_{\rm loc}(\Omega)$ and for each $h \in \R$, we have
\begin{align}
\int_{B(x,r)} g_{(u-h)_+}^p  d\mu \; \leq \;
\frac{C}{(R-r)^{p}} & \int_{B(x,R)} (u-h)_+^p \,d\mu  \notag \\ &  \;+\;
\Big(\|f_0\|_s + 2|h|^p\|f_1\|_s\Big) \, \mu\big(A_R(h)\big)^{1 - \frac{1}{s}}\notag
\end{align}
for $B(x,R) \subset \Omega$,  $0 < r < R \leq R_0$.
Here  $C = C(p,Q,K) >0$ and $R_0 = R_0(p,Q,f_1) > 0$.
\end{lemma}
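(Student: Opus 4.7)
The plan is to apply the quasi-minimizer inequality \eqref{eq_qmin} to a test function obtained by truncating $u$ near the level $h$ against a Lipschitz cutoff, and then to recover the conclusion via a hole-filling argument together with the Sobolev inequality of Lemma~\ref{lemma_SPI}.

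First I would fix $x \in \Om$ and $0 < r < R \leq R_0$ with $B(x,R) \subset \Om$, and choose a Lipschitz function $\eta \colon X \to [0,1]$ with $\eta \equiv 1$ on $B_r$, $\spt(\eta) \subset B_R$, and $\Lip(\eta) \leq (R-r)^{-1}$. Set $v := u - \eta(u-h)_+$. Then $u-v \in N^{1,p}_0(B_R)$ and $\{u \neq v\} \subset A_R(h)$, so $v$ is admissible in \eqref{eq_qmin}. Writing $v = \min(u,h) + (1-\eta)(u-h)_+$ and applying Lemma~\ref{lemma_leibniz} to the second summand, a valid upper gradient on $A_R(h)$ satisfies
\[
g_v \;\leq\; (1-\eta)\,g_{(u-h)_+} \;+\; (u-h)_+\,\Lip(\eta),
\]
while $g_v = g_u$ on the complement, where $v = u$.

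Next I would feed this $v$ into \eqref{eq_qmin} restricted to $A_R(h)$ and invoke the structure conditions \eqref{eq_structcond} on both sides: lower-bound the LHS by $\int g_{(u-h)_+}^p - f_1|u|^p - f_0$ and upper-bound the RHS by $KL\int g_v^p + Kf_1|v|^p + Kf_0$. After expanding $(a+b)^p \leq 2^{p-1}(a^p+b^p)$ and using $(1-\eta) \equiv 0$ on $B_r$, one obtains
\[
\int_{A_r(h)} g_{(u-h)_+}^p \,d\mu \;\leq\; 2^{p-1}KL \int_{A_R(h) \setminus B_r} g_{(u-h)_+}^p \,d\mu \;+\; \frac{C}{(R-r)^p}\int_{A_R(h)} (u-h)_+^p \,d\mu \;+\; E,
\]
where $E$ collects the $f_0$, $f_1|u|^p$, and $f_1|v|^p$ contributions. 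Adding $2^{p-1}KL\int_{A_r(h)} g_{(u-h)_+}^p$ to both sides converts this into a self-improving inequality $\varphi(r) \leq \theta\,\varphi(R) + C(R-r)^{-p}\!\int (u-h)_+^p + E$ with $\theta = \frac{2^{p-1}KL}{1+2^{p-1}KL} < 1$, to which the standard iteration lemma (\textit{cf.}\ Giusti, Lemma~6.1) applies, removing the gradient term from the right-hand side.

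It remains to dispatch $E$. H\"older gives $\int_{A_R(h)} f_0 \leq \|f_0\|_s\,\mu(A_R(h))^{1-1/s}$. On $A_R(h)$ one has $|u|^p \leq 2^{p-1}((u-h)_+^p + |h|^p)$ and $|v|^p \leq 2^{p-1}(|u|^p + |h|^p)$, so the $f_1$ terms produce the desired $C|h|^p\|f_1\|_s\,\mu(A_R(h))^{1-1/s}$ plus a residual $C\int_{A_R(h)} f_1\,(u-h)_+^p$. This residual is the principal obstacle. I would bound it by H\"older with conjugate exponent $s'$, interpolating $L^{ps'} \hookrightarrow L^p \cap L^{p^*}$ (valid since $s > Q/p$ gives $ps' < p^*$), and applying Lemma~\ref{lemma_SPI} to a suitable truncation of $(u-h)_+$ supported in $B_R$; the outcome is
\[
\int f_1\,(u-h)_+^p \,d\mu \;\leq\; C\,\|f_1\|_s\,\mu(B_R)^{\d}\,R^p \int_{B_R} g_{(u-h)_+}^p \,d\mu,
\]
with $\d = p/Q - 1/s > 0$. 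By Lemma~\ref{lemma_lowerahlforsreg}(2) the factor $\mu(B_R)^{\d}R^p \to 0$ as $R \to 0$, so choosing $R_0 = R_0(p,Q,f_1) > 0$ small enough renders this term absorbable into the left-hand side. This last absorption is exactly what forces the radius restriction $R \leq R_0$ in the statement.
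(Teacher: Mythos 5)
Your overall architecture (the comparison function $v=u-\eta(u-h)_+$, the structure conditions, Widman hole filling, and a Sobolev-based absorption that forces $R\le R_0$) matches the paper's, but the step in which you dispatch the residual $\int_{A_R(h)} f_1(u-h)_+^p\,d\mu$ has a genuine gap. Lemma~\ref{lemma_SPI} applies only to functions in $N^{1,p}_0(B_R)$, and $(u-h)_+$ does not vanish near $\partial B_R$; applying the lemma to a cutoff $\eta(u-h)_+$ only controls $\int f_1\,\eta^p(u-h)_+^p\,d\mu$, and the part of the residual on $\{\eta<1\}$ (in particular on the annulus $B_R\setminus B_r$) is left uncontrolled. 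In fact your displayed ``outcome''
\begin{equation*}
\int_{A_R(h)} f_1\,(u-h)_+^p\,d\mu \;\leq\; C\,\|f_1\|_s\,\mu(B_R)^{\delta}\,R^p\int_{B_R} g_{(u-h)_+}^p\,d\mu
\end{equation*}
is false as a general estimate: if $(u-h)_+$ equals a positive constant on $B_R$, the right-hand side vanishes while the left-hand side need not. The leftover annulus contribution cannot be bounded by $|h|^p$, by $f_0$, or by the gradient, and your hole-filling cannot absorb it either, because the quantity you iterate consists of the gradient integral alone.

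The paper closes exactly this hole by different bookkeeping: on $A_R(h)$ it writes $v=(1-\eta)u+\eta h$ and $u=(1-\eta)u+\eta h+\eta(u-h)$, so that $(u-h)_+^p$ only ever appears multiplied by $\eta^p$ (where Lemma~\ref{lemma_SPI} is legitimately applicable), while the boundary-layer contribution is kept in the form $(1-\eta)^p\big(g_u^p+f_1|u|^p\big)$; then, after adding $2\int f_1|u|^p\,d\mu$ to both sides, the hole filling is performed for the quantity $\varphi(t)=\int_{A_t(h)}\big(g_u^p+f_1|u|^p\big)\,d\mu$, i.e.\ the term $f_1|u|^p$ is carried along in the iterated quantity so that the annulus term is absorbed by Widman's trick. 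You need this device (or a Sobolev--Poincar\'e inequality without zero boundary values, which the paper does not provide and which would involve dilated balls possibly leaving $\Omega$). A secondary slip: even where the Sobolev step is valid, the correct coefficient is $\|f_1\|_s\,\mu(B_R)^{\delta}\cdot R^p\mu(B_R)^{-p/Q}$, where $R^p\mu(B_R)^{-p/Q}$ is bounded by Lemma~\ref{lemma_lowerahlforsreg}(1) and the smallness for $R\le R_0$ comes from $\mu(B_R)^{\delta}$, not from $R^p$ as in your scaling.
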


The proof is in two parts. In Part (1) one uses the quasi-minimizing property to compare the $p$-energies between different balls.  In Part (2) we use a variant of Widman's hole filling argument \cite{Widman} and an iteration in order to estimate the upper gradient by the function and its level sets.

\begin{proof}
{\bf Part (1): Energy Bounds}.
Let $B_r := B(x,r)$ and $B_R := B(x,R)$. Let $\eta : X \to \R$ be a Lipschitz function so that
$\spt{\eta} \subset\bar{B}_R$, as well as $\eta|_{B(x,r)} = 1$ and $g_\eta \leq C(R-r)^{-1}$.
Putting
$$
v \;:=\; u - (u-h)_+\eta,
$$
it follows that $u-v \in N^{1,p}_0(X)$ and $\spt(u-v) = \overline{A_R(h)}$. 
By the quasi-minimizing property \eqref{eq_qmin} and the structure conditions
\eqref{eq_structcond}, we obtain
\begin{equation} \label{eq_applyqmin}
\begin{split}
\int_{A_R(h)} \big( g_u^p - f_1|u|^p - f_0 \big)\,d\mu &\;\leq\;
\F(u;A_R(h)) \;\leq\;
K \, \F(v;A_R(h)) \\ &\;\leq\;
K \int_{A_R(h)} \big( g_v^p + f_1|v|^p + f_0 \big)\,d\mu.
\end{split}
\end{equation}
For points in $A_R(h)$, we rewrite the functions $u$ and $v$ as
\begin{eqnarray*}
v \;=\;
u - \eta(u-h)_+ &=&
(1-\eta)(u-h) + h \\ &=&
(1-\eta)u + \eta h \\
u \;=\;
(1-\eta)u + \eta u &=&
(1-\eta)u + \eta h + \eta (u-h),
\end{eqnarray*}
in order to obtain the estimates
\begin{equation}
\label{eq_temp2}
g_v \leq
\frac{(u-h)_+}{R-r} \,+\, (1-\eta)g_u
\end{equation}
and
\begin{equation}\label{eq_temp3}
|u|^p + |v|^p \leq
C \Big[ (1-\eta)^pu^p + \eta^ph^p  + \eta^p(u-h)_+^p\Big].
\end{equation}
Adding the term $\int_{B_R} f_1 (2|u|^p+f_0)\,d\mu$ to both sides of \eqref{eq_applyqmin}, it follows from inequalities \eqref{eq_temp2} and \eqref{eq_temp3} that
\begin{equation}
\label{eq_initial_estimate}
\begin{split}
\int_{A_R(h)} \big( g_u^p + f_1|u|^p \big)\,d\mu \;\leq\;&
2K \int_{A_R(h)} \big(
g_v^p + f_1\big(|u|^p+|v|^p\big) + f_0
\big)\,d\mu \\
\leq\;&
C\int_{A_R(h)} \big(
g_v^p + 2f_1\big[
(1-\eta)^pu^p + \eta^p|h|^p \big] + f_0
\big)\,d\mu \\ & +\;
C\int_{B_R} f_1\eta^p(u-h)_+^p \,d\mu
\end{split}
\end{equation}
To estimate the rightmost term, we use H\"older's inequality, the Sobolev 
inequality (Lemma \ref{lemma_SPI}), and the Leibniz rule (Lemma 
\ref{lemma_leibniz}) so that
\begin{eqnarray*}
\int_{B_R} f_1 (\eta(u-h)_+)^p \,d\mu &\leq&
\|f_1\|_{\frac{Q}{p},B_R}
\left( \int_{B_R} \eta^p (u-h)_+^p\,d\mu
\right)^{p^*/p} \\ &\leq&
\|f_1\|_{\frac{Q}{p},B_R}
\int_{B_R} g_{\eta(u-h)_+}^p\,d\mu \\ &\leq&
\|f_1\|_{\frac{Q}{p},B_R}
\int_{B_R} \Big(
\eta^p g_{(u-h)_+}^p \,+\, g_\eta^p(u-h)_+^p
\Big) \,d\mu \\ &\leq&
\|f_1\|_{\frac{Q}{p},B_R} \Big[
\int_{A_r(h)} g_u^p \,d\mu \;+\;
\int_{A_R(h)} \frac{(u-h)_+^p}{(R-r)^p} \,d\mu
\Big].
\end{eqnarray*}
This together with \eqref{eq_initial_estimate} implies
\[
\begin{split}
\int_{A_R(h)} &\big( g_u^p + f_1|u|^p \big)\,d\mu \\
\leq\;&
C \int_{A_R(h)} \Big(
\frac{(u-h)_+^p}{(R-r)^p} \,+\, (1-\eta)^p\big(g_u^p + f_1|u|^p\big)
+ f_1|h|^p + f_0
\Big)\,d\mu \\
& \;+\;
C\|f_1\|_{\frac{Q}{p},B_R} \Big[
\int_{A_r(h)} g_u^p \,d\mu \;+\;
\int_{A_R(h)} \frac{(u-h)_+^p}{(R-r)^p} \,d\mu
\Big]
\end{split}
\]
Choosing $R_0 > 0$ small enough so that $C \|f_1\|_{\frac{Q}{p}, B(x,R_0)} < \half$,
we obtain
\begin{eqnarray*}
\int_{A_r(h)} \big( g_u^p + f_1|u|^p \big)\,d\mu &\leq&
C \int_{A_R(h) \setminus A_r(h)} \big(g_u^p + f_1|u|^p\big) \, d\mu \,+\,
\frac{1}{2} \int_{A_r(h)} g_u^p \,d\mu \\ & & \;+\;
C\int_{A_R(h)} \Big[
\frac{(u-h)_+^p}{(R-r)^p} \,+\, 2f_1|h|^p + f_0
\Big]\,d\mu.
\end{eqnarray*}

\noindent{\bf Part (2): Hole filling}.
Adding
$(C-\frac{1}{2})\int_{A_r(h)} \big( g_u^p + f_1|u|^p\big) \,d\mu$
to both sides and dividing by $C+\frac{1}{2}$, we obtain, for $\theta :=
\frac{2C}{2C+1}$, the inequality
\begin{eqnarray*}
\int_{A_r(h)} \big( g_u^p + f_1|u|^p \big)\,d\mu &\leq&
\theta \int_{A_R(h)} \big(g_u^p + f_1|u|^p\big) \, d\mu \\ & & \,+\,
\int_{A_R(h)} \Big[
\frac{(u-h)_+^p}{(R-r)^p} + 2f_1|h|^p + f_0
\Big] \,d\mu.
\end{eqnarray*}
Next we iterate this equation, under the choice of radii
\begin{eqnarray*}
r_0&=&r, \\
r_{i}-r_{i-1}&=&(1-\lambda)\lambda^i  (R-r),\
\trm{ for } i=1,2,\ldots, \trm{ where } \lambda^p \in (\theta,1),
\end{eqnarray*}
so the previous estimate becomes
\begin{equation} \label{eq_iterholefill}
\begin{split}
\int_{A_r(h)} \big( g_u^p + f_1|u|^p \big)\,d\mu \;\leq\;&
\theta^k \int_{A_{r_i}} \big(g_u^p + f_1|u|^p\big) \, d\mu \\ &\;+\;
\sum_{i=0}^k  \theta^{i} \int_{A_{r_i}}
\frac{(u-h)_+^p}{(r_i-r_{i-1})^p} +
\big(2f_1|h|^p + f_0\big) \Big] \,d\mu.
\end{split}
\end{equation}
Passing to a limit, as $k\to \infty$, gives
\[
\begin{split}
\int_{A_r(h)} \big( g_u^p + f_1|u|^p \big)\,d\mu \;\leq\;&
C \int_{A_R(h)} \Big[
\frac{(u-h)_+^p}{(R-r)^p} + 2f_1|h|^p + f_0
\Big] \,d\mu \\ \leq\;&
C \int_{A_R(h)} \frac{(u-h)_+^p}{(R-r)^p} \,d\mu \;+\;
C \int_{A_R(h)} \big(2f_1|h|^p + f_0 \big) \,d\mu.
\end{split}
\]
and the lemma follows, from applying H\"older's
inequality to the last term:
$$
\hspace{3em}
\int_{B_R} \big(2f_1|h|^p + f_0\big) \,d\mu \;\leq\;
\big( 2\|f_1\|_{s,B_R} |h|^p + \|f_0\|_{s,B_R} \big) \,
\mu(A_R(h))^\frac{s-1}{s}.
\hspace{3em}\qedhere
$$
\end{proof}

The proof above remains valid with $-u$, $-v$, and $-h$ in place of $u$,
$v$, and $h$, respectively.  From this we conclude that, for each $h \in
\R$, the inequality
$$
\int_{B_r} g_{(u-h)_-}^p d\mu \;\leq\;
C \int_{B_R} \frac{(u-h)_-^p}{(R - r)^p}  \,d\mu \,+\,
\big(\|f_0\|_s + 2|h|^p\|f_1\|_s\big) \,
\mu\big(D_R(h)\big)^{1 - \frac{1}{s}}
$$
holds for quasi-minimizers $u \in N^{1,p}(\Om)$, with the same constants as before.

\section{Local Boundedness and De Giorgi Classes} \label{sect_locbdd}

\subsection{Initial Estimates}

As a first step, we show that every quasi-minimizer has an a.e.\ representative that is locally bounded; see \cite[Thm 1]{DiBenedettoTrudinger} and \cite[Thm 10.2.1]{DiBenedetto} for the case of $\R^n$ and \cite[Thm 4.9]{KinnunenShan} for the case $F=0$ on metric spaces.
We begin with a well-known iteration lemma, see for example \cite{Giusti}.

\begin{lemma} \label{lemma_fastconv}
Let $b > 1$ and $\sigma, C > 0$ be given.  If $\{Y_n\}_{n=0}^\infty$ is a sequence in $[0,\infty)$ whose terms satisfy, for $n=0,1,\ldots$, the inequalities
$$
Y_{n+1} \;\leq\; C b^n Y_n^{1+\sigma} \; \textrm{ and } \;
Y_0 \;\leq\; b^{-1/\sigma^2}C^{-1/\sigma}
$$
then $Y_n\leq b^{-n/\sigma} Y_0$ and in particular
\[
\begin{split}
Y_n \to 0\quad \trm{as}\quad n \to \infty.
\end{split}
\]
\end{lemma}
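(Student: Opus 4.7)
The plan is to prove the pointwise bound $Y_n \leq b^{-n/\sigma} Y_0$ by induction on $n$; the claim $Y_n \to 0$ then follows immediately because $b > 1$ gives $b^{-n/\sigma} \to 0$. The base case $n = 0$ is trivial, with equality.

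For the inductive step, I would assume $Y_n \leq b^{-n/\sigma} Y_0$ and substitute into the given recurrence to obtain
$$
Y_{n+1} \;\leq\; C b^n Y_n^{1+\sigma} \;\leq\; C b^n \bigl( b^{-n/\sigma} Y_0 \bigr)^{1+\sigma}.
$$
Collecting the powers of $b$ using $b^n \cdot b^{-n(1+\sigma)/\sigma} = b^{-n/\sigma}$, the right-hand side simplifies to $C \, Y_0^{\sigma} \cdot b^{-n/\sigma} \, Y_0$. The desired bound $Y_{n+1} \leq b^{-(n+1)/\sigma} Y_0$ therefore reduces to verifying the single algebraic inequality
$$
C \, Y_0^{\sigma} \;\leq\; b^{-1/\sigma},
$$
which is exactly the smallness hypothesis $Y_0 \leq b^{-1/\sigma^2} C^{-1/\sigma}$ raised to the power $\sigma$.

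There is no real obstacle to speak of: the lemma is a clean algebraic induction, and the smallness threshold on $Y_0$ has been calibrated precisely so that a single factor of $b^{-1/\sigma}$ is gained at each step, allowing the induction to close. The conclusion $Y_n \to 0$ is then an immediate consequence of geometric decay.
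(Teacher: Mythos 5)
Your induction is correct: the exponent bookkeeping $b^{n}\,b^{-n(1+\sigma)/\sigma}=b^{-n/\sigma}$ is right, and the closing inequality $C\,Y_0^{\sigma}\leq b^{-1/\sigma}$ is precisely the stated smallness hypothesis raised to the power $\sigma$. The paper itself omits the proof, citing \cite{Giusti}, and your argument is exactly the standard induction given there, so nothing further is needed.
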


Next we prove the local boundedness for quasi-minimizers.
Below, recall that $Q' > 0$ refers to the exponent in Part (2) of Lemma \ref{lemma_lowerahlforsreg}, $\Lambda \geq 1$ refers again to the parameter in Lemma \ref{lemma_SPI}, and $\d := \frac{p}{Q} - \frac{1}{s}$.

\begin{lemma} \label{lemma_weakharnack}
There exists $C = C(p,Q,K,s) \geq 0$ so that the inequalities
\begin{eqnarray*}
\sup_{B(x,R/2)} u &\leq&
C\Big[ \dashint_{B(x,R)} u_+^p \,d\mu \Big]^{1/p} +\; 2\gamma R^{(Q'\d)/p} \\
\inf_{B(x, R/2)} u &\geq&
-C\Big[ \dashint_{B(x,R)} u_-^p \,d\mu \Big]^{1/p} -\; 2\gamma R^{(Q'\d)/p}
\end{eqnarray*}
hold for each quasi-minimizer $u \in N^{1,p}_{\rm loc}(\Omega)$ and all balls $B(x,R)$ in $\Omega$ with sufficiently small $\mu$-measure and  with $B(x,2\Lambda R) \subset \Omega$.  Here
$$
\gamma \;:=\; \max\{\|f_0\|_s, 2\|f_1\|_s\}.
$$
\end{lemma}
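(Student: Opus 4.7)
The plan is to run a De Giorgi iteration on a decreasing sequence of concentric balls and an increasing sequence of truncation levels, controlling the resulting excess integrals via the Caccioppoli inequality (Lemma \ref{lemma_caccioppoli}) combined with the Sobolev--Poincar\'e inequality at exponent $q<p$ from Keith--Zhong (Theorem \ref{thm_openended} and Lemma \ref{lemma_SPI}), and then appealing to the fast-convergence Lemma \ref{lemma_fastconv}. The parameter controlling the limiting level will be chosen large enough to absorb the inhomogeneous Caccioppoli tail; that choice is what produces the $\gamma R^{(Q'\delta)/p}$ summand.

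\textbf{Setup and first estimate.} Fix $x$ and $R$ with $B(x,2\Lambda R)\subset\Omega$, and set
$$
R_n \;=\; \frac{R}{2} \,+\, \frac{R}{2^{n+1}}, \qquad
h_n \;=\; k \,+\, d\bigl(1 - 2^{-n}\bigr), \qquad
B_n \;=\; B(x,R_n),
$$
for parameters $k\ge 0$ and $d>0$ to be selected. Put $Y_n := \dashint_{B_n}(u-h_n)_+^p\,d\mu$. On the pair of radii $(R_{n+1},R_n)$ at level $h_{n+1}$, Lemma \ref{lemma_caccioppoli} bounds $\int_{B_{n+1}} g_{(u-h_{n+1})_+}^p\,d\mu$ by a principal part of order $Y_n\,\mu(B_n)\,2^{pn}/R^p$ plus the inhomogeneous tail $(\|f_0\|_s + 2|h_{n+1}|^p\|f_1\|_s)\mu(A_{R_n}(h_{n+1}))^{1-1/s}$. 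Introducing a cutoff $\eta_n$ with $\eta_n\equiv 1$ on $B_{n+1}$, $\spt\eta_n\subset B_n$, and $\Lip(\eta_n)\lesssim 2^n/R$, I apply Lemma \ref{lemma_SPI} to $\eta_n(u-h_{n+1})_+$ at exponent $q<p$, invoking the Leibniz rule (Lemma \ref{lemma_leibniz}) to dominate the upper gradient.

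\textbf{Assembling the iteration.} Chebyshev's inequality gives
$$
\mu\bigl(A_n(h_{n+1})\bigr) \;\le\; \frac{2^{p(n+1)}}{d^p}\int_{B_n}(u-h_n)_+^p\,d\mu,
$$
and H\"older's inequality allows me to pass from the $L^{q^\ast}$-bound on $(u-h_{n+1})_+$ down to $L^p$ on $A_{n+1}(h_{n+1})$, paying a power of $\mu(A_{n+1}(h_{n+1}))$. Normalising by $\mu(B(x,R))$ via the doubling condition, I obtain an inequality of the schematic form
$$
Y_{n+1} \;\le\; C\,b^n\, d^{-\sigma p}\,Y_n^{1+\sigma} \;+\; \mathrm{Tail}_n,
$$
with $b>1$ and $\sigma>0$ depending only on $p,Q,s$. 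The residual $\mathrm{Tail}_n$ comes from the inhomogeneous Caccioppoli term; combining $\|f_0\|_s$, $|h_{n+1}|^p\|f_1\|_s \le (k+d)^p\gamma$, the Chebyshev bound on $\mu(A_{R_n}(h_{n+1}))$, and Lemma \ref{lemma_lowerahlforsreg}(2) to convert the mass factor $\mu(B_R)^{-1/s}$ into $R^{-Q'/s}$, one sees that $\mathrm{Tail}_n$ can be decomposed into a piece of the form $C b^n Y_n^{1+\sigma'}$ and a residual of order $\gamma^p R^{Q'\delta}(k+d)^{p}/d^{p}$.

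\textbf{Choice of $d$ and conclusion.} For the supremum bound take $k=0$ and
$$
d \;=\; C_\ast\!\left(\dashint_{B(x,R)} u_+^p\,d\mu\right)^{\!1/p} \,+\, 2\gamma R^{(Q'\delta)/p},
$$
with $C_\ast=C_\ast(p,Q,K,s)$ large. This makes $Y_0\le d^p$ satisfy the smallness condition $Y_0\le b^{-1/\sigma^2}C^{-1/\sigma}$ of Lemma \ref{lemma_fastconv}, while the $\gamma R^{(Q'\delta)/p}$ summand guarantees that the inhomogeneous residual is dominated by the main iteration. Lemma \ref{lemma_fastconv} then yields $Y_n\to 0$, hence $(u-d)_+=0$ almost everywhere on $B(x,R/2)$, which is the first claimed inequality. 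The lower bound on $\inf_{B(x,R/2)} u$ follows by replacing $u,v,h$ by $-u,-v,-h$ throughout and using the analogous Caccioppoli inequality for $(u-h)_-$ noted after the proof of Lemma \ref{lemma_caccioppoli}.

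\textbf{Main obstacle.} The nontrivial step, absent in the homogeneous case treated in \cite{KinnunenShan}, is controlling the coupling between the level $h_n$ and the tail $|h_n|^p\|f_1\|_s$: this forces one to separate a piece absorbed by the $d^{-\sigma p}$ gain of the iteration from a piece that contributes the scaling $R^{(Q'\delta)/p}$, using Lemma \ref{lemma_lowerahlforsreg}(2) to convert the measure-theoretic exponent $1-1/s$ into the correct power of $R$. This is also the reason why one imposes that $\mu(B(x,R))$ be sufficiently small, so that $\|f_1\|_{s,B_R}$ sits below the Caccioppoli threshold $R_0=R_0(p,Q,f_1)$ required in Lemma \ref{lemma_caccioppoli}; the Keith--Zhong open-ended Poincar\'e inequality is what makes the gain factor $Y_n^{1+\sigma}$ available in the first place.
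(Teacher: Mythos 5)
Your proposal follows essentially the same De Giorgi iteration as the paper's proof: Caccioppoli plus the Sobolev inequality and Chebyshev yield a superlinear recursion for the level-set excesses $Y_n$, the inhomogeneous tail is absorbed by taking $\mu(B)$ small and building the summand $2\gamma R^{(Q'\d)/p}$ into the limiting level $d$, and Lemma \ref{lemma_fastconv} together with the symmetry $u \mapsto -u$ finishes, exactly as in the paper. The one caveat is that your ``residual of order $\gamma^p R^{Q'\d}(k+d)^p/d^p$'' must be understood as the coefficient of a superlinear term $[2^{pn}Y_n]^{1+\d}$, as in the paper's \eqref{eq_fundamental_estimate}, and not as an additive constant (an additive constant would prevent $Y_n \to 0$); your choice of $d$ is precisely what renders that coefficient harmless, matching the paper's absorption step.
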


The proof below follows a technical iteration argument that is, to some 
extent, standard.  We will also use similar arguments to prove other 
results in this section.  For the sake of exposition we divide it into two 
steps: (1) By using Caccioppoli's and Sobolev's inequalities, we derive a 
level set inequality with higher level set on the right hand side, and (2) 
we iterate the estimate.

\begin{proof}
{\bf Part (1):\ Level set inequality}.
Since $-u$ is also a quasi-minimizer, the inequality for the infimum follows easily from the inequality for the supremum of $-u$, so we prove the supremum inequality.

Let $r > 0$ with $R/2 < r < R$, let be $\eta$ a cut-off function such that $\spt \eta\subset \ol B_{R}$, $\eta=1$ in $B_r$, $g_\eta\leq C/(R-r) $ and let $k>0$.  By using H\"older's and Sobolev's inequalities, we obtain
\[
\begin{split}
\dashint_{B_r} (u-k)_+^p \ud \mu
&\leq\; \left[ \frac{\mu(A_r(k))}{\mu(B_r)}\right]^\frac{p}{Q}\left[\dashint_{B_r} (u-k)_+^{p^*} \ud \mu\right]^\frac{p}{p^*} \\
&\leq\; \left[ \frac{\mu(A_r(k))}{\mu(B_r)}\right]^\frac{p}{Q} \left[\dashint_{B_{(R+r)/2}} (\eta(u-k)_+)^{p^*} \ud \mu\right]^\frac{p}{p^*} \\
&\leq\; C \left[ \frac{\mu(A_r(k))}{\mu(B_r)}\right]^\frac{p}{Q} R^p \dashint_{B_{(R+r)/2}} g_{\eta(u-k)_+}^{p} \ud \mu.
\end{split}
\]
By the Leibniz rule (Lemma~\ref{lemma_leibniz}) and the Caccioppoli inequality (Lemma \ref{lemma_caccioppoli}), we have for $0<h<k$ that
\begin{equation} \label{eq_level1}
\begin{split}
\dashint_{B_r} &(u-k)_+^p \ud \mu \\
\leq\;& C \left[ \frac{\mu(A_r(k))}{\mu(B_r)}\right]^\frac{p}{Q} R^p \left[
\dashint_{B_{(R+r)/2}} \big(
\eta^p g_{(u-k)_+}^{p} + g_\eta^p(u-k)_+^p
\big) \,d\mu
\right] \\
\leq\;& C\left[ \frac{\mu(A_r(k))}{\mu(B_r)}\right]^\frac{p}{Q}R^p
 \left[
\dashint_{B_R} \frac{(u-h)_+^p}{(R-r)^p} \ud \mu +
\gamma (1+k^p)\frac{\mu(A_R(k))^{1-1/s}}{\mu(B_r)}
\right],
\end{split}
\end{equation}
holds, where $\gamma := \max\{\|f_0\|_s, 2\|f_1\|_s\}$.
By Lemma \ref{lemma_lowerahlforsreg}, we may assume that  
\[
\begin{split}
R^p\leq C \mu(B_R)^\frac{p}{Q}
\end{split}
\]
and recalling that $1-1/s=1-\frac{p}{Q}+\delta$, we obtain
\[
\begin{split}
\left[ \frac{\mu(A_r(k))}{\mu(B_r)}\right]^\frac{p}{Q}R^p\mu(A_R(k))^{1-\frac{1}{s}}
=\;&\left[ \frac{\mu(A_r(k))}{\mu(B_r)}\right]^\frac{p}{Q}R^p\mu(A_R(k))^{1-\frac{p}{Q}+\delta}\\
=\;&C \left[ \frac{\mu(A_r(k))}{\mu(B_r)} \frac{\mu(B_R)}{\mu(A_R(k))}\right]^\frac{p}{Q}\mu(A_{R}(k))^{1+\delta}\\
\leq&\; C \mu(A_R(k))^{1+\delta}.
\end{split}
\]
From this, \eqref{eq_level1}, and the elementary estimate for $h>k$, we have
\[
\begin{split}
\int_{B_R} (u-h)_+^p \,d\mu \;>\;
\int_{A_R(k)} (u-h)_+^p \,d\mu \;\geq\;
(k-h)^p \mu(A_R(k)),
\end{split}
\]
equation \eqref{eq_level1} becomes
\[
\begin{split}
\dashint_{B_r} (u-k)_+^p \ud \mu
\leq\;&\; C\left[ \frac{\mu(A_r(k))}{\mu(B_r)}\right]^\frac{p}{Q} \frac{R^p}{(R-r)^p}
\kint_{B_R} (u-h)_+^{p} \ud \mu \\
&\;+\;
\frac{C\gamma (1+k^p)\mu(B_{R})^\d}{(k-h)^{p(1+\delta)}}\left(\dashint_{B_R} (u-h)_+^p \ud\mu\right)^{1+\delta}
\end{split}
\]
and dividing by $(k-h)^p$, we obtain
\begin{equation} \label{eq_pre_iter}
\begin{split}
\dashint_{B_r} \frac{(u-k)_+^p}{(k-h)^p} \ud \mu
\;\leq&\; C \frac{R^p}{(R-r)^p}
\left(\kint_{B_R} \frac{(u-h)_+^p}{(k-h)^p} \ud \mu \right)^{1+\frac{p}{Q}} \\
&\;+\;
\frac{C\gamma (1+k^p)\mu(B_{R})^\d}{(k-h)^p}\left(\dashint_{B_R} \frac{(u-h)_+^p}{(k-h)^p} \ud\mu\right)^{1+\delta}
\end{split}
\end{equation}

\noindent {\bf Part (2):\ Iteration}.
We now iterate the previous inequality with $h$ and $k$ replaced by $k_n$ and $k_{n+1}$, respectively, and where
\[
\begin{split}
k_n=d(1-2^{-n})
\end{split}
\]
and where $d>0$ is a parameter to be chosen later.  Similarly, the balls $B_r$ and $B_R$ are replaced by $B_n$ and $B_{n+1}$, respectively, where
$$
B_n \;=\; B(x,r_n), \trm{ for } r_n=\frac{R}{2}(1+2^{-n}).
$$
For the sequence of integrals
$$
Y_n \;:=\; d^{-p}\dashint_{B_n} (u-k_n)_+^p \,d\mu,
$$
equation \eqref{eq_pre_iter} then becomes
\begin{equation}
\label{eq_fundamental_estimate}
\begin{split}
Y_{n+1}
\leq&C [2^{pn}
Y_{n}]^{1+\frac{p}{Q}} +
\frac{C\gamma (1+k_{n+1}^p)\mu(B_n)^\d }{d^p}[2^{pn}Y_n]^{1+\delta}
\end{split}
\end{equation}
We may estimate the rightmost term, by means of the inequality
\[
\begin{split}
\gamma (1+k_{n+1}^p)\mu(B_0)^\d \;\leq\;
\gamma (1+d^p)\mu(B_0)^\d \;\leq\; d^p.
\end{split}
\]
  Indeed, this follows from choosing $B_0$ sufficiently small so that $\gamma \mu(B_0)^\d < 1/2$, as well as $d$ sufficiently large so that
$$
\frac{\gamma \mu(B_0)^\d}{1-\gamma \mu(B_0)^\d} \;\leq\; 2\gamma \mu(B_0)^\d \;\leq\; d^p.
$$
Without loss of generality, we may assume that $Y_n \leq 1$.
The above estimate and \eqref{eq_fundamental_estimate} imply that
\begin{equation} \label{eq_iter}
\begin{split}
Y_{n+1} &\;\leq\;
C\Big( [2^{np}Y_n]^{1+\frac{p}{Q}} \,+\, [2^{np} Y_n]^{1+\d} \Big) \;\leq\;
C2^{np(1+\sigma')}Y_n^{1+\sigma} \;=:\;
\hat Cb^nY_n^{1+\sigma}
\end{split}
\end{equation}
where, as a shorthand, we write
\begin{equation} \label{eq_iter-param}
\sigma \,:=\, \min\big\{ \frac{p}{Q}, \d \big\}, \hspace{.1in}
\sigma' \,:=\, \max\big\{ \frac{p}{Q}, \d \big\}, \hspace{.1in}
b \;:=\; 2^{p(1+\sigma')}.
\end{equation}
Choosing $d$ larger if necessary, so that the inequality
$$
Y_0 \;=\;
d^{-p} \dashint_{B_0} (u - k_0)_+^p \,d\mu \;=\;
d^{-p} \dashint_{B(x,R)} u_+^p \,d\mu \;\leq\;
\min\{ \hat C^{-1/\sigma}b^{-1/\sigma^2},1\}
$$
holds, we invoke Iteration Lemma \ref{lemma_fastconv} and conclude that
$$
0 \;=\;
\lim_{n \to \infty} Y_n \;=\;
\lim_{n \to \infty} \dashint_{B_n} (u-k_n)_+^p \,d\mu \;=\;
\dashint_{B(x,R/2)} (u-d)_+^p \,d\mu.
$$
As a result, $u \leq d$ holds a.e.\ on $B(x,R/2)$.  In particular, for the choice
\begin{equation} \label{eq_iter-cond}
d \;:=\; \max\left\{
2\gamma\,\mu(B(x,R))^\frac{\d}{p},\,
\Big( b^{1/\sigma^2}\hat C^{1/\sigma} \dashint_{B(x,R)} u_+^p \,d\mu \Big)^\frac{1}{p}
\right\}
\end{equation}
we obtain the inequality
$$
\sup_{B(x,R/2)} u \;\leq \; d \;\leq\;
C'\Big[ \dashint_{B(x,R)} u_+^p \,d\mu \Big]^\frac{1}{p} \;+\; 2\gamma\mu(B(x,R))^\frac{\d}{p}
$$
where $C' := \big(b^{1/\sigma^2}\hat C^{1/\sigma}\big)^\frac{1}{p}$.  The lemma then follows from Lemma \ref{lemma_lowerahlforsreg}.
\end{proof}

\subsection{De Giorgi classes}

In his study of elliptic PDE, De Giorgi observed that the validity of a Caccioppoli-type inequality for solutions implies regularity properties of the same solutions.  We will therefore focus on classes of functions, called {\em De Giorgi classes}, that satisfy such inequalities.  Since quasi-minimizers are a subset of these functions, we will not refer explicitly to the quasi-minimizing property \eqref{eq_qmin} in the sequel.

We first modify the Caccioppoli inequality to obtain simpler nonhomogeneous terms. To this end, fix a ball $B$  and consider the parameters
$$
M \;:=\; \max\left\{ \Big(\dashint_{B} |u|^p\,d\mu\Big)^{1/p} \,+\; c\, r^{(Q'\d)/p} \right\}
\; \textrm{ and } \;
g_0 \;:=\; f_0 + Mf_1
$$
which are well-defined, by Lemma~\ref{lemma_weakharnack}, and where $f_0$ and $f_1$ are from the structure conditions \eqref{eq_structcond}. Observe that $u$ is also a quasi-minimizer of the restricted functional $\mathcal{G}[v] := \mathcal{F}[v|_{B}]$.  Since $\mathcal{G}$ satisfies the {\em reduced} structure conditions
$$
|z|^p \,-\, g_0(x) \;\leq\; \mathcal{F}_0(x,u,z) \;\leq\; L|z|^p + g_0(x)
$$
for all $x \in B$, Lemma \ref{lemma_caccioppoli} therefore implies the Caccioppoli-type inequality
\begin{equation} \label{eq_caccioppoli-noh}
\int_{B_r} g_{(u-h)_+}^p\,d\mu \;\leq\;
\frac{C}{(R-r)^p} \int_{B_R} (u-h)_+^p\,d\mu \;+\; \|g_0\|_s \mu\big(A_R(h)\big)^{1-\frac{p}{Q}+\d}
\end{equation}
for concentric balls $B_{r}\subset B_{R}\subset B$.
\begin{defn} \label{defn_DG}
Let $\d > 0$ and $C, \gamma \geq 0$ be given.  A function $u \in N^{1,p}_{\rm loc}(\Omega)$ is in the class
$DG^+(\Om)=DG^+_p(\Omega;C,\gamma,\d)$ if the inequality
\begin{equation} \label{eq_DG}
\int_{B(x,r)} g_{(u-k)_+}^p d\mu \;\leq\;
\frac{C}{(R - r)^p} \int_{B(x,R)} (u-k)_+^p d\mu \,+\,
\gamma^p\, \mu(A_R(k))^{1 - \frac{p}{Q} + \d}.
\end{equation}
holds for all $k \in \R$ and all balls $B(x_0,r)$ and $B(x_0,R)$ in $\Omega$ with $0 < r < R$.  We say that $u$ is in the class $DG_p^-(\Omega,)$ if $-u$ is  in the class $DG_p^+(\Omega)$.  The \emph{De Giorgi class} on $\Omega$ with parameters $\delta$, $\gamma$, and $C$ is then the set of functions
$$
DG_p(\Omega) \;:=\; DG^+_p(\Omega) \cap
DG_p^-(\Omega).
$$
\end{defn}

\section{H\"older Continuity of Quasi-Minimizers} \label{sect_holder}

We now prove that functions in the De Giorgi class have H\"older continuous representatives.   This is a local property, so we may assume $\Om$ to be bounded. 

By adapting the proof of Lemma \ref{lemma_weakharnack}, one obtains estimates of the oscillation of $u \in DG_p(\Omega)$ on balls.  This observation, formulated below, will play a crucial step towards continuity (Theorem \ref{thm_holder}).

\begin{lemma} \label{lemma_iteration}
Let $B = B(x,R)$ be a ball in $\Omega$, let $u \in DG_p(\Omega)$, put
$$
M \;:=\; \sup_B u \; \textrm{ and } \;
m \;:=\; \inf_Bu  
$$
There exists $\e_0 = \e_0(p,Q,M,\d) > 0$ so that
\begin{enumerate}
\item if 
$\mu\big(A_R(M -\xi\osc_{B} u) \big) \leq \e_0 \, \mu(B)$
holds for $\xi>0$, then
\begin{eqnarray}\label{eq_firstalt}
{\rm either}\quad  \; u &\leq&
M- \frac{\xi}{2}\osc_{B} u \; \;\quad \mu\textrm{-a.e.\ on } \frac{1}{2}B  \\
{\rm or} \quad \; \osc_{B} u&\leq&
\xi^{-1} c \, R^{(Q'\d)/p}\notag
\end{eqnarray}
\item if 
$\mu\big(D_R(m +\xi\osc_{B} u) \big) \leq \e_0 \, \mu(B)$
holds for $\xi>0$, then
\begin{eqnarray}\label{eq_firstalt2}
{\rm either}\quad \; u &\geq &
m + \frac{\xi}{2}\osc_{B} u \; \; \quad \mu\textrm{-a.e.\ on } \frac{1}{2}B \\
{\rm or}\quad  \; \osc_{B} u&\leq&
\xi^{-1} c \, R^{(Q'\d)/p}\notag.
\end{eqnarray}
\end{enumerate}
\end{lemma}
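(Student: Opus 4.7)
The plan is to imitate the De Giorgi iteration from the proof of Lemma~\ref{lemma_weakharnack}, but anchored at the top of the range $M=\sup_B u$, so that smallness of the single super-level set at height $M - \xi\omega$ (where $\omega:=\osc_B u$) propagates to a pointwise improvement of $\sup u$ on the half-ball. Since $u\in DG_p^+(\Om)$ iff $-u\in DG_p^-(\Om)$ with the same parameters, part (2) is just part (1) applied to $-u$, so I focus on (1). The strategy is to introduce nested balls and increasing levels, derive a superlinear recursion for the normalized super-level measure, and invoke Lemma~\ref{lemma_fastconv}; the nonhomogeneous term from the De~Giorgi class dictates a dichotomy---either it is controlled and the iteration runs, or its dominance forces the oscillation bound directly.

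\textbf{Setup.} Put
\begin{equation*}
r_n := \tfrac{R}{2}\bigl(1+2^{-n}\bigr), \qquad
k_n := M - \tfrac{\xi\omega}{2}\bigl(1+2^{-n}\bigr), \qquad B_n := B(x,r_n),
\end{equation*}
so that $r_n\searrow R/2$, $k_n\nearrow M-\xi\omega/2$, and $k_0 = M-\xi\omega$. Define
\begin{equation*}
Y_n := \mu\bigl(A_{r_n}(k_n)\bigr)\big/\mu(B_n),
\end{equation*}
so the hypothesis reads $Y_0\leq \e_0$, and trivially $Y_n\leq 1$. The target is $Y_n\to 0$, which amounts to $u\leq M-\xi\omega/2$ a.e.\ on $\tfrac{1}{2} B$.

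\textbf{Recursive inequality and dichotomy.} On $A_{r_{n+1}}(k_{n+1})$, $u-k_n\geq k_{n+1}-k_n = \xi\omega/2^{n+2}$, hence
\begin{equation*}
\bigl(\tfrac{\xi\omega}{2^{n+2}}\bigr)^p\,\mu\bigl(A_{r_{n+1}}(k_{n+1})\bigr)\;\leq\;\int_{B_{r_{n+1}}}(u-k_n)_+^p\,d\mu.
\end{equation*}
Choose a Lipschitz cut-off $\eta$ equal to $1$ on $B_{r_{n+1}}$, supported in $B_{r_n}$, with $g_\eta\leq C 2^n/R$. Chaining H\"older's inequality at exponents $p^*/p$ and $Q/p$, the Sobolev inequality (Lemma~\ref{lemma_SPI}) applied to $\eta(u-k_n)_+\in N^{1,p}_0(B_{r_n})$, the Leibniz rule (Lemma~\ref{lemma_leibniz}), and the De~Giorgi class inequality~\eqref{eq_DG}---together with the pointwise bound $(u-k_n)_+\leq \xi\omega$ on $B$---yields, after comparing measures of concentric balls by doubling and converting $\mu(B)^{-1/s}$ into a power of $R$ via Lemma~\ref{lemma_lowerahlforsreg}, a recursion of the form
\begin{equation*}
Y_{n+1} \;\leq\; \hat C\,b^n\,\Bigl( Y_n^{1+p/Q} \;+\; \tfrac{\gamma^p R^{Q'\d}}{(\xi\omega)^p}\,Y_n^{1+\d}\Bigr).
\end{equation*}
If $\omega \leq \xi^{-1}\,c\,R^{(Q'\d)/p}$ for a suitable $c$, the second alternative of~\eqref{eq_firstalt} already holds. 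Otherwise the coefficient in front of $Y_n^{1+\d}$ is $\leq 1$, and with $\sigma := \min\{p/Q,\d\}$ and $Y_n\leq 1$, the bound collapses to $Y_{n+1}\leq 2\hat C\,b^n\,Y_n^{1+\sigma}$.

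\textbf{Conclusion and main obstacle.} Fixing $\e_0 := (2\hat C)^{-1/\sigma}b^{-1/\sigma^2}$, the Fast Convergence Lemma~\ref{lemma_fastconv} delivers $Y_n\to 0$, so $\mu\bigl(A_{R/2}(M-\xi\omega/2)\bigr)=0$, which is the first alternative of~\eqref{eq_firstalt}. The step I expect to be most delicate is the derivation of the recursion: arranging the Sobolev--Leibniz--De~Giorgi chain so that both the ``gradient-like'' and the ``nonhomogeneous'' contributions appear as strictly superlinear powers of $Y_n$ (with exponents $1+p/Q$ and $1+\d$, respectively), and isolating the coefficient $\gamma^p R^{Q'\d}/(\xi\omega)^p$ cleanly via Lemma~\ref{lemma_lowerahlforsreg}(2), so that the dichotomy $\omega \gtrless \xi^{-1}R^{(Q'\d)/p}$ produces the two alternatives of the lemma. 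The remaining bookkeeping mirrors the iteration in Lemma~\ref{lemma_weakharnack}.
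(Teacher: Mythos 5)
Your proposal is correct and follows essentially the same De Giorgi iteration as the paper's proof: the same levels $k_n$ and radii $r_n$, the same Sobolev--Leibniz--Caccioppoli chain, the same dichotomy according to whether $\osc_B u$ exceeds $\xi^{-1}c\,R^{(Q'\d)/p}$ (which controls the nonhomogeneous coefficient), and the same conclusion via Lemma~\ref{lemma_fastconv}. The only difference is cosmetic: you iterate the normalized level-set measures $\mu(A_{r_n}(k_n))/\mu(B_n)$ instead of the paper's quantities $k_0^{-p}\dashint_{B_n}(u-k_n)_+^p\,d\mu$, which slightly cleans the bookkeeping (e.g.\ $Y_n\leq 1$ is automatic and your $\e_0$ comes out independent of $M$) but is the same argument.
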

For the homogeneous case $f_{1}=f_{0}=0$, the proof below shows that only the first alternatives \eqref{eq_firstalt} and \eqref{eq_firstalt2} occur.

\begin{proof}
As a shorthand, write $\omega :=\osc_{B} u$.  The argument is symmetric, so we prove the first case only. Consider levels
$$
k_n \;:=\; \Big(M- \frac{\xi \omega}{2}\Big) - 2^{-n}\Big(\frac{\xi \omega}{2}\Big),
$$
let $B_n$ be the same sequence of balls centered at $x$ as before,
$$
B_n \;=\; B(x,r_n), \trm{ for } r_n=\frac{R}{2}(1+2^{-n})
$$
and consider the sequence of integrals
$$
Y_n \;:=\; \frac{1}{k_0^p} \dashint_{B_n} (u-k_n)_+^p\,d\mu.
$$
Following the proof of Lemma \ref{lemma_weakharnack}, we obtain an inequality similar to \eqref{eq_fundamental_estimate}:
\[
\begin{split}
\frac{2^{np}k_0^p}{(\xi \omega)^p} Y_{n+1} \leq\;&
2^{np}C k_0^{p(1+\frac{p}{Q})} \Big[\frac{2^{np}}{(\xi \omega)^p} Y_n\Big]^{1+\frac{p}{Q}} \\ & +
\frac{2^{np}Ck_0^{p(1+\delta)}  \gamma(1+k_{n+1}^p)\mu(B_{n+1})^\delta}{(\xi \omega)^p}
\Big[ \frac{2^{np}}{(\xi \omega)^p} Y_n\Big]^{1+\delta}.
\end{split}
\]
Now suppose that the second conclusion fails, so that
$$
(\xi \omega)^p \;>\;
c^p \,R^{Q'\d} \,\geq\,
c^p \,\mu(B)^\d.
$$
Then the previous inequality takes the form
\[
\begin{split}
 Y_{n+1}\leq C \left(\frac{k_0}{\xi \omega}\right)^\frac{p^2}{Q} [2^{np} Y_n]^{1+\frac{p}{Q}} +
 C \gamma(1+k_{n+1}^p) \left(\frac{k_0}{\xi \omega}\right)^{p\d} [2^{np} Y_n]^{1+\delta}.
\end{split}
\]
Now with the parameters $\sigma$, $\sigma'$, and $b$ as in \eqref{eq_iter-param}, and with
\begin{eqnarray*}
C = c \max\left\{ \left( \frac{k_0}{\xi\omega} \right)^\frac{p^2}{Q},\; \left( \frac{k_0}{\xi\omega} \right)^{p\d} \right\},
\end{eqnarray*}
we obtain the iteration inequality
$$
Y_{n+1} \;\leq\; C b^n Y_n^{1+\sigma}.
$$
From our choice of levels $k_n$, we obtain $u - k_0 \,=\, u -M + \xi\omega \;\leq\; \xi\omega$.
This and the density condition imply that
\begin{eqnarray*}
Y_0 &=& \frac{1}{k_0^p\mu(B_0)} \int_{A_{R_0}(k_0)} (u-k_0)_+^p\,d\mu \;\leq\;
\frac{1}{k_0^p \mu(B)} \int_{A_R(M - \xi\omega)} (\xi\omega)^p \,d\mu \\ &=&
\frac{(\xi\omega)^p}{k_0^p} \, \frac{\mu(A_R(M-\xi\omega))}{\mu(B)} \;\leq\;
\e_0 \left( \frac{\xi\omega}{k_0} \right)^p.
\end{eqnarray*}
By the previous calculation, choosing $\e_0 > 0$ sufficiently small, it follows that
\[
\begin{split}
Y_0 \;\leq\; \e_0 \left( \frac{\xi\omega}{k_0} \right)^p
&\leq\; b^{-1/\sigma^2}C^{-1/\sigma}\\
&=\; b^{-1/\sigma^2}c \min\left\{ \left( \frac{\xi\omega}{k_0} \right)^\frac{p^2}{Q},\; \left( \frac{\xi\omega}{k_0} \right)^{p\d} \right\}^{1/\min\{\frac{p}{Q},\d\}}.
\end{split}
\]
So by Lemma \ref{lemma_fastconv}, we obtain the convergence
$$
0 \;=\;
\lim_{n \to \infty} Y_n \;=\;
\frac{1}{k_0^p} \dashint_{\frac{1}{2}B} \Big(u - \Big(M - \frac{\xi\omega}{2} \Big) \Big)_+^p \,d\mu
$$
as well as an upper bound for $u$ on $\frac{1}{2}B$:
$$
\hspace{10em} u \;\leq\; M - \frac{\xi\omega}{2} \; \mu\trm{-a.e.\ on }\; \frac{1}{2}B. \hspace{10em} \qedhere
$$
\end{proof}

We recall two facts.  The first is a direct analogue of \cite[Eq 5.1]{KinnunenShan}, which replaces the role of the ``discrete isoperimetric inequality'' in $\R^n$ \cite{DeGiorgi}.  Apart from differences between Definition \ref{defn_DG} and \cite[Defn 3.1]{KinnunenShan} and the constants in \eqref{eq_DG} versus
\cite[Eq.\ 3.1]{KinnunenShan}, the proof is identical.  Below, $\Lambda $ refers to the constant from Lemma \ref{lemma_SPI}. The proof  uses Poincar\'e's and H\"older's inequalities, together with the fact that $u\in DG_p(\Om)$. 

\begin{lemma} \label{lemma_discreteiso}
Let $u \in DG_p(\Omega)$ and let $h < k$.  If $B = B(z,R)$ is a ball in
$X$ so that $2\Lambda B \subset \Omega$ and so that, for some $\theta \in
(0,1)$, the density condition
$$
\mu(A_R(h)) \;\leq\;
\theta \, \mu(B)
$$
holds, then there exists $c = c(\gamma, p, Q, \Lambda) > 0$ such
that, for all $q \in (1,p)$,
\[
\begin{split}
\mu(A_R(k)) \;\leq\;&\; 
\frac{c \, \mu(B)^{1-\frac{1}{q}}}{k-h} \,
\Big(
\mu(A_{\Lambda R}(h)) - \mu(A_{\Lambda R}(k))
\Big)^{\frac{1}{q}-\frac{1}{p}} \\ & 
\cdot \Big(
\int_{B_{2\Lambda R}} (u-h)_+^p \,d\mu \;+\;
\gamma^p R^p \mu(A_{2\Lambda R}(h))^{1-\frac{p}{Q} + \d}
\Big)^\frac{1}{p}.
\end{split}
\]
\end{lemma}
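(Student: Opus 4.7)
The plan is to combine a standard double-truncation argument with the Keith-Zhong sharpened Poincar\'e inequality and the De Giorgi estimate itself. First, I would introduce the truncation
$$
v \;:=\; (u-h)_+ - (u-k)_+,
$$
which vanishes on $\{u \leq h\}$, equals $u-h$ on the transition strip $\{h<u<k\}$, and is identically $k-h$ on $\{u \geq k\}$. Since $v$ is constant off the transition strip, a standard truncation lemma gives $g_v = g_{(u-h)_+} \cdot \chi_{\{h<u<k\}}$ $\mu$-a.e., which is exactly the feature that will produce the measure of the transition strip in the final estimate.

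Next I would apply a $(1,q)$-Poincar\'e inequality to $v$ on $B = B(z,R)$, where $q$ is chosen slightly below $p$ via Theorem \ref{thm_openended}. This yields
$$
\int_B |v - v_B| \, d\mu \;\leq\; C\,R\,\mu(B)^{1-1/q} \Big( \int_{\Lambda B} g_v^q \, d\mu \Big)^{1/q}.
$$
Because $0 \leq v \leq k-h$ and $v$ is supported in $A_R(h)$, the density hypothesis $\mu(A_R(h)) \leq \theta\mu(B)$ forces $v_B \leq \theta(k-h)$, and since $v = k-h$ throughout $A_R(k)$, we obtain $|v - v_B| \geq (1-\theta)(k-h)$ there. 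Integrating over $A_R(k) \subset B$ produces the lower bound
$$
(1-\theta)(k-h)\,\mu(A_R(k)) \;\leq\; \int_B |v - v_B| \, d\mu.
$$

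The third step extracts the promised measure factor via H\"older's inequality with exponents $p/q$ and $p/(p-q)$, using that $g_v$ vanishes off $A_{\Lambda R}(h) \setminus A_{\Lambda R}(k)$:
$$
\Big( \int_{\Lambda B} g_v^q \, d\mu \Big)^{1/q} \;\leq\;
\big( \mu(A_{\Lambda R}(h)) - \mu(A_{\Lambda R}(k)) \big)^{1/q - 1/p}
\Big( \int_{\Lambda B} g_{(u-h)_+}^p \, d\mu \Big)^{1/p}.
$$
The remaining $L^p$-gradient integral is exactly what the De Giorgi condition \eqref{eq_DG} controls: applied with inner radius $\Lambda R$ and outer radius $2\Lambda R$ it gives
$$
\int_{\Lambda B} g_{(u-h)_+}^p \, d\mu \;\leq\;
\frac{C}{R^p} \int_{2\Lambda B} (u-h)_+^p \, d\mu \,+\, \gamma^p\,\mu\big(A_{2\Lambda R}(h)\big)^{1 - p/Q + \d}.
$$
Chaining the three displays and pulling the Poincar\'e factor of $R$ inside the $p$-th root (so that $\gamma^p$ is promoted to $\gamma^p R^p$) yields the stated inequality after dividing by $(1-\theta)(k-h)$.

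The main obstacle is essentially bookkeeping rather than conceptual: one needs to justify the support property $\spt g_v \subset \{h < u < k\}$ in the Newtonian setting, to select $q$ within the Keith-Zhong open interval so that $1/q - 1/p > 0$ and the constants remain uniform, and to track how the factors $\mu(B)$, $\mu(\Lambda B)$, and $\mu(2\Lambda B)$ collapse under doubling into a single $\mu(B)^{1-1/q}$ prefactor. As the paper notes, these are precisely the steps carried out identically in \cite{KinnunenShan}.
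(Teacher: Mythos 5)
Your argument is precisely the paper's intended proof (the one it cites from Kinnunen--Shanmugalingam): truncate at the two levels, apply the sharpened $(1,q)$-Poincar\'e inequality, use the density hypothesis to keep $v_B$ away from $k-h$ on $A_R(k)$, extract the transition-strip measure by H\"older with exponents $p/q$ and $p/(p-q)$, and close with the De Giorgi inequality \eqref{eq_DG} on the radii $\Lambda R < 2\Lambda R$. The only caveat, which you yourself flag, is that $q$ must be taken in the Keith--Zhong admissible range (and the constant also picks up a harmless factor $(1-\theta)^{-1}$), which is consistent with how the lemma is actually invoked later.
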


For functions $u \in DG_p(\Omega)$, we now consider the measure decay properties of their super-level sets.  The lemma below is proved by a standard telescoping argument; see also \cite[Lemma 5.2]{KinnunenShan} and \cite[Prop 10.5.1]{DiBenedetto}.
As a shorthand, we write
$$
M := \sup_{2\Lambda B}u \; \textrm{ and } \;
m := \inf_{2\Lambda B}u \;
$$
which are well-defined parameters, by Lemma \ref{lemma_weakharnack}.

\begin{lemma} \label{lemma_getdensity}
Let $u \in DG_p(\Omega)$ and let $B = B(z,R)$ be a ball in $X$ so that $2\Lambda B \subset \Omega$.
\begin{enumerate}
\item If there exists $\theta \in (0,1)$ such that the density condition
$$
\mu\Big(A_R\Big(M-\frac{1}{2}\osc_{2\Lambda B}u\Big)\Big) \leq \theta \, \mu(B)
$$
holds, then for each $\e > 0$, there exists $\xi \in (0,1)$ so that
\begin{eqnarray} \label{eq_firstalt3}
\textrm{either } \;
\e \, \mu(B) &\geq& \mu\Big(A_R\Big(M-\xi\osc_{2\Lambda B}u\Big)\Big) \\
\textrm{ or } \;
\osc_{2\Lambda B}u &\leq& \xi^{-1}\gamma^p R^{(Q'\d)/p}. \notag
\end{eqnarray}
\item If there exists $\theta \in (0,1)$ such that the density condition
$$
\mu\Big(D_R\Big(m + \frac{1}{2}\osc_{2\Lambda B}u\Big)\Big) \leq \theta \, \mu(B)
$$
holds, then for each $\e > 0$, there exists $\xi \in (0,1)$ so that
\begin{eqnarray} \label{eq_firstalt4}
\textrm{either } \;
\e \, \mu(B) &\geq& \mu\Big(D_R\Big(m+\xi\osc_{2\Lambda B}u\Big)\Big) \\
\textrm{ or } \;
\osc_{2\Lambda B}u &\leq& \xi^{-1}\gamma^p R^{(Q'\d)/p}. \notag
\end{eqnarray}
\end{enumerate}
\end{lemma}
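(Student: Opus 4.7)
By the symmetry between $u$ and $-u$ it suffices to establish Part (1). My plan is to apply Lemma \ref{lemma_discreteiso} along a geometric sequence of levels that increases up to $M$ and to combine the resulting inequalities by a telescoping sum; this is the standard De Giorgi pattern, following \cite[Lemma 5.2]{KinnunenShan} and \cite[Prop 10.5.1]{DiBenedetto}, but with extra care taken to absorb the nonhomogeneous term against the oscillation. Writing $\omega := \osc_{2\Lambda B} u$ and $h_j := M - 2^{-j}\omega$ for $j \geq 1$, the super-level sets $A_R(h_j)$ are nested and contained in $A_R(h_1)$, so the density hypothesis $\mu(A_R(h_1)) \leq \theta\mu(B)$ propagates to every level $h_j$, and Lemma \ref{lemma_discreteiso} applies with $(h,k) = (h_j, h_{j+1})$, noting $k - h = 2^{-j-1}\omega$.

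Next I would estimate the two factors on the right-hand side of Lemma \ref{lemma_discreteiso}. Since $u \leq M$ on $2\Lambda B$, one has $(u-h_j)_+ \leq 2^{-j}\omega$ there, so doubling gives
\[
\int_{B_{2\Lambda R}} (u-h_j)_+^p\,d\mu \;\leq\; C(2^{-j}\omega)^p\mu(B).
\]
Combining the bounds $R^p \leq C\mu(B)^{p/Q}$ and $\mu(B)^{\d} \leq CR^{Q'\d}$ from Lemma \ref{lemma_lowerahlforsreg} then yields $\gamma^p R^p \mu(A_{2\Lambda R}(h_j))^{1 - p/Q + \d} \leq C\gamma^p R^{Q'\d}\mu(B)$. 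At this point I would invoke the negation of the second alternative: if $\omega > \xi^{-1}\gamma^p R^{(Q'\d)/p}$, then (after harmlessly replacing $\gamma$ by $\max\{\gamma,1\}$) this inhomogeneous piece is controlled by $(2^{-j}\omega)^p\mu(B)$ uniformly in $j$.

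Plugging these bounds into Lemma \ref{lemma_discreteiso}, the prefactor $(h_{j+1} - h_j)^{-1} = 2^{j+1}/\omega$ cancels against the factor $2^{-j}\omega$ coming from $I^{1/p}$, leaving
\[
\mu(A_R(h_{j+1})) \;\leq\; C\mu(B)^{1 - \frac{1}{q} + \frac{1}{p}}\bigl[\mu(A_{\Lambda R}(h_j)) - \mu(A_{\Lambda R}(h_{j+1}))\bigr]^{\frac{1}{q} - \frac{1}{p}}
\]
for any $q \in (1,p)$. Raising to the power $pq/(p-q)$ and summing over $j = 1, \ldots, n$, the measure differences telescope to at most $\mu(\Lambda B) \leq C\mu(B)$ and the algebraic exponents combine so that the total right-hand side is at most $C\mu(B)^{pq/(p-q)}$. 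Since $\mu(A_R(h_j))$ is nonincreasing in $j$, the left-hand sum dominates $n\cdot\mu(A_R(h_{n+1}))^{pq/(p-q)}$, giving $\mu(A_R(h_{n+1})) \leq Cn^{-(p-q)/(pq)}\mu(B)$. Given $\e > 0$, I pick $n$ large enough that this coefficient is at most $\e$ and set $\xi := 2^{-(n+1)}$, so that $M - \xi\omega = h_{n+1}$ and the first alternative of \eqref{eq_firstalt3} holds. The main technical obstacle is the uniform-in-$j$ absorption of the $\gamma^p R^{Q'\d}$ term above: without the lower bound on $\omega$ supplied by the failure of the second alternative, the clean $2^{-j}\omega$ cancellation breaks down and the telescoping sum produces no decay.
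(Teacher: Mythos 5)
Your proposal is correct and follows essentially the same route as the paper: the same geometric levels $M-2^{-j}\omega$, the same application of Lemma \ref{lemma_discreteiso}, absorption of the inhomogeneous term via the negated second alternative (the paper handles the $\gamma$ powers with the same looseness your $\max\{\gamma,1\}$ device makes explicit), and the same telescoping sum with $\xi$ a suitable dyadic power. No gaps beyond the harmless constant-tracking you already flag.
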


Similarly as in Lemma \ref{lemma_iteration}, only the first alternatives \eqref{eq_firstalt3} and \eqref{eq_firstalt4} occur for the homogeneous case $f_0=f_1=0$.

\begin{proof}
The argument is symmetric, so we prove the first case only.  As a shorthand, let $\omega := \osc_{2\Lambda B}u$.  Consider levels of the form
$$
k_n \;:=\; M - 2^{-n}\omega.
$$
Observe that $k_n \to M$ as $n \to \infty$ and that
$$
M - k_n \;=\; 2^{-n}\omega \;=\; \frac{1}{2}(k_{n+1} - k_n).
$$
Put $\xi := 2^{-N}$, for some $N \in \mathbb{N}$ to be chosen later.
Now suppose the second conclusion fails.
Then for each $n = 1, 2, \ldots, N$, we have 
\begin{equation} \label{eq_notball}
\omega \;>\;
\xi^{-1}\gamma^p R^{(Q'\d)/p} \;\geq\;
C 2^n\gamma^p\mu(B)^\frac{\d}{p}.
\end{equation}
Using the density condition hypothesis with $\theta$, we now apply Lemma \ref{lemma_discreteiso} with $h= k_n $ and $k=k_{n+1}$, for each $n \in \N$, 
to obtain
\begin{eqnarray*}
\mu(A_R(k_{n+1})) &\leq&
\frac{c\mu(B)^{1-\frac{1}{q}}}{k_{n+1}- k_n } \,
\Big(
\mu(A_{\Lambda R}(k_n)) - \mu(A_{\Lambda R}( k_{n+1} ))
\Big)^{\frac{1}{q}-\frac{1}{p}} \\ & & \;
\cdot \Big(
\int_{2\Lambda B} (u-k_n)_+^p \,d\mu \;+\;
\gamma^p R^p \mu(A_{2\Lambda R}(k_n))^{1-\frac{p}{Q} + \d}
\Big)^\frac{1}{p}.
\end{eqnarray*}
From this and  $u - k_{n} \leq M - k_{n}\leq 2^{-n} \omega$, it follows that
\begin{equation} \label{eq_telescope}
\begin{split}
\omega \, \mu(A_R(k_{n+1})) \;\leq\;&\;
c2^{n+1} \, \mu(B)^{1-\frac{1}{q}} \, \big(
\mu(A_{\Lambda R}(k_{n})) - \mu(A_{\Lambda R}( k_{n+1} ))
\big)^{\frac{1}{q}-\frac{1}{p}} \\ &\;
\cdot \Big(
\frac{\omega^p \, \mu(2\Lambda B)}{2^{np}} \;+\;
\gamma^p R^p \mu(2\Lambda B)^{1-\frac{p}{Q} + \d}
\Big)^\frac{1}{p}.
\end{split}
\end{equation}
According to Lemma \ref{lemma_lowerahlforsreg}, we have  $C R^Q \leq \mu(2\Lambda B)$ for $C>0$.  From this and  \eqref{eq_notball}, for $n=N$, we further estimate
\begin{eqnarray*}
R^p \, \mu(2\Lambda B)^{1-\frac{p}{Q} + \d} &=&
\Big( (R^Q)^p \, \mu(2\Lambda B)^{Q-p + Q\d} \Big)^\frac{1}{Q} \\ &\leq&
\Big( \Big(\frac{\mu(2\Lambda B)}{C}\Big)^p \mu(2\Lambda B)^{Q -p + Q\d} \Big)^\frac{1}{Q} \\ &\leq&
C^{-\frac{p}{Q}} \, \mu(2\Lambda B)^{1+\d} \;\leq\;
C\frac{\omega^p}{2^{np}} \, \mu(2\Lambda B).
\end{eqnarray*}
Equation \eqref{eq_telescope} therefore becomes
\[
\begin{split}
\omega \, \mu(A_R&(k_{n+1})) \\
\;\leq\;&
C2^{n+1} \, \mu(B)^{1-\frac{1}{q}} \,
\Big( \frac{\omega^p \, \mu(2\Lambda B)}{2^{np}} \Big)^\frac{1}{p}
\cdot \Big(
\mu(A_{\Lambda R}(k_{n})) - \mu(A_{\Lambda R}( k_{n+1 } ))
\Big)^{\frac{1}{q}-\frac{1}{p}}
\\ \;\leq\;&
C \, \omega \, \mu(2\Lambda B)^{1-\frac{1}{q}+\frac{1}{p}} \,
\Big(
\mu(A_{\Lambda R}(k_n)) - \mu(A_{\Lambda R}( k_{n+1} ))
\Big)^{\frac{1}{q}-\frac{1}{p}}
\end{split}
\]
and therefore we have
\begin{eqnarray*}
\mu(A_R(k_{N+1}))^\frac{pq}{p-q} &\leq&
\mu(A_R(k_{n+1}))^\frac{pq}{p-q} \\ &\leq&
C \, \mu(2\Lambda B)^{\frac{pq}{p-q}-1} \,
\Big(
\mu(A_{\Lambda R}(k_n)) - \mu(A_{\Lambda R}( k_{n+1} ))
\Big).
\end{eqnarray*}
For $N \in \N$, we sum over the previous inequality and obtain
\begin{eqnarray*}
\mu(A_R(k_{N+1}))^\frac{pq}{p-q} &\leq&
\frac{C}{N} \, \sum_{n=0}^N \mu(2\Lambda B)^{\frac{pq}{p-q}-1} \,
\big(
\mu(A_{\Lambda R}(k_n)) - \mu(A_{\Lambda R}( k_{n+1} ))
\big) \\ &\leq&
\frac{C}{N} \, \mu(2\Lambda B)^{\frac{pq}{p-q}-1} \,
\big(
\mu(A_{\Lambda R}(k_0)) - \mu(A_{\Lambda R}(k_{N+1}))
\big) \\ &\leq&
\frac{C}{N} \, \mu(2\Lambda B)^\frac{pq}{p-q}.
\end{eqnarray*}
With $\e > 0$, choose $N \in \N$ so that $C \leq N\e^\frac{pq}{p-q}$.  From our previous choices of $\xi = 2^{-N}$ and $k_{N+1} = M - \xi\omega$, we obtain the first conclusion
$$
\hspace{5em} \mu(A_R(M-\xi\omega)) \;=\;
\mu(A_R(k_{N+1})) \;\leq\;
\e \mu(2\Lambda B). \hspace{5em}  \qedhere
$$
\end{proof}

Given a function in $DG_p(\Omega)$, we now prescribe its modulus of 
continuity from the density of its level sets.  We first explain the idea.

We estimate the oscillation of $u$ in two stages.  By a trivial estimate, either the sub- or the super-level set of $u$ has density at most $\tfrac{1}{2}$.  After applying Lemma \ref{lemma_getdensity}, we see that either the oscillation is already bounded, or the sub- or super-level set has even smaller density.  If the second alternative occurs, then we apply Lemma \ref{lemma_iteration}, so either the oscillation is already bounded, or we obtain a pointwise bound for $u$, as desired.
As before, we assume that Standard Hypotheses \ref{standhyp} are in force.

\begin{thm} \label{thm_holder} \label{cor_holder}
There exist $C > 1$, $\alpha > 0$, depending only on the parameters, so that for all $u \in DG_p(\Omega)$ and all balls $B(x,r) \subset B(x,R) \subset \Omega$, we have
\begin{equation} \label{eq_modcty}
\mathop{\osc}_{B(x,r)} u \;\leq\;
C \, \max\Big\{
\big(\mathop{\osc}_{B(x,R)} u\big)
\Big(\frac{r}{R}\Big)^\alpha, \;
\gamma^p r^{(Q'\d)/p}
\Big\}.
\end{equation}
In particular, every $u \in DG_p(\Omega)$ has an a.e.\ representative that is locally $\beta$-H\"older continuous, with
$\beta := \min\{ \alpha, (Q'\d)/p \}$.
\end{thm}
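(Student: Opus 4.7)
The proof implements the two-stage strategy outlined just before the theorem: at each scale, I combine a trivial pigeonhole bound with Lemmas~\ref{lemma_getdensity} and~\ref{lemma_iteration} to either shrink the oscillation by a fixed factor or bound it by the inhomogeneous term, then iterate at geometric scales.

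For the one-scale decay, fix $B_0 = B(x,\rho_0)$ small enough that $2\Lambda B_0 \subset \Omega$ and all smallness hypotheses from Lemmas~\ref{lemma_weakharnack}, \ref{lemma_getdensity}, and~\ref{lemma_iteration} are satisfied. Set $M := \sup_{B_0} u$, $m := \inf_{B_0} u$, $\omega := \osc_{B_0} u$, and let $B' := (2\Lambda)^{-1}B_0$ of radius $R' := \rho_0/(2\Lambda)$, so that $2\Lambda B' = B_0$. The sets $A_{R'}(\tfrac{M+m}{2})$ and $D_{R'}(\tfrac{M+m}{2})$ are disjoint subsets of $B'$, so at least one has $\mu$-measure at most $\tfrac12\mu(B')$; by symmetry I may assume $\mu(A_{R'}(M-\omega/2)) \leq \tfrac12 \mu(B')$. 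Applying Lemma~\ref{lemma_getdensity}(1) to $B'$ with $\theta=\tfrac12$ and $\e=\e_0$ (the threshold from Lemma~\ref{lemma_iteration}), I obtain either the direct bound $\omega \leq C\gamma^p\rho_0^{Q'\d/p}$, or the improved density $\mu(A_{R'}(M-\xi\omega)) \leq \e_0\mu(B')$. In the second case, Lemma~\ref{lemma_iteration}(1) applied to $B'$ (with the pair $M,\omega$ inherited from $B_0$, which is legal since its proof uses only that $u \leq M$) yields either a similar direct bound or $u \leq M - \xi\omega/2$ $\mu$-a.e.\ on $\tfrac12 B' = B(x,\rho_0/(4\Lambda))$; since the infimum on this smaller ball is at least $m$, the last alternative implies
\[
\osc_{B(x,\rho_0/(4\Lambda))} u \;\leq\; (1-\xi/2)\,\omega.
\]

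I then iterate. Setting $\lambda := 1/(4\Lambda)$, $\eta := 1-\xi/2 \in (0,1)$, $\rho_n := \lambda^n \rho_0$, and $\omega_n := \osc_{B(x,\rho_n)} u$, the one-scale step yields the recursion
\[
\omega_{n+1} \;\leq\; \max\{\eta\, \omega_n,\ C\gamma^p \rho_n^{Q'\d/p}\},
\]
which by induction in $n$ becomes
\[
\omega_n \;\leq\; C \max\{\eta^n \omega_0,\ \gamma^p \rho_n^{Q'\d/p}\}.
\]
Defining $\alpha$ by $\eta = \lambda^\alpha$, this reads $\omega_n \leq C\max\{(\rho_n/\rho_0)^\alpha \omega_0, \gamma^p \rho_n^{Q'\d/p}\}$. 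For $0<r<\rho_0$, choosing $n$ with $\rho_{n+1} \leq r < \rho_n$ and absorbing the factor $\lambda$ into $C$ produces \eqref{eq_modcty}. The H\"older representative then follows: for nearby $y,z \in \Omega$, apply \eqref{eq_modcty} with $r := d(y,z)$ to $B(y,r) \subset B(y,R)$ to get $|u(y)-u(z)| \leq \osc_{B(y,r)} u \leq C\, d(y,z)^\beta$ with $\beta := \min\{\alpha, Q'\d/p\}$.

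The main delicate point is not any single estimate but the parameter bookkeeping: picking $\rho_0$ small enough for all three preceding lemmas simultaneously; coordinating the coupled pair $(\xi, \e_0)$ between Lemmas~\ref{lemma_iteration} and~\ref{lemma_getdensity}; applying Lemma~\ref{lemma_iteration} with an inherited $M$ slightly larger than $\sup_{B'} u$ (valid because its proof only needs $M$ to be an upper bound); and verifying that the max-type recursion telescopes cleanly, including absorbing factors like $\lambda^{-Q'\d/p}$ into $C$. Each piece is routine, but must be done consistently.
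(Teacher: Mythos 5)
Your proposal is correct and follows essentially the same route as the paper's own proof: the midpoint-level pigeonhole with $\theta=\tfrac12$, then Lemma~\ref{lemma_getdensity} with $\e=\e_0$ feeding into Lemma~\ref{lemma_iteration}, giving either reduction of the oscillation by the factor $1-\xi/2$ on a ball shrunk by $(4\Lambda)^{-1}$ or the direct bound by the inhomogeneous term, followed by geometric iteration and the choice of $\alpha$ from $\lambda^\alpha = \eta$. Your handling of the $2\Lambda$-scaling (working on $B'=(2\Lambda)^{-1}B_0$ so that $2\Lambda B'=B_0$) and of the inherited parameters $M,\omega$ is just a more explicit version of the bookkeeping the paper leaves implicit.
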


\begin{proof}
As before, let $M$ and $m$ be the supremum and infimum of $u$ on $ 2\Lambda B$, respectively,
and let $\omega := \osc_{2 \Lambda ' B} u$.
We observe that
$$
\frac{M+m}{2} \;=\; M - \frac{\omega}{2} \;=\; m + \frac{\omega}{2}.
$$
So for $\theta = \frac{1}{2}$, one of the inequalities
$$
\mu\left(D_R\left(M - \frac{\omega}{2}\right)\right) \leq \theta \mu(B(x,R))
\, \textrm{ or } \,
\mu\left(A_R\left(m + \frac{\omega}{2}\right)\right) \leq \theta \mu(B(x,R))
$$
must hold.
The argument is symmetric, so suppose the rightmost inequality holds.  Lemma \ref{lemma_getdensity} implies that for each $\e > 0$, there exists $\xi > 0$ satisfying 
$$
\omega \;\leq\; \xi^{-1} \gamma^p\mu(B(x,R))^\frac{\d}{p}
\; \textrm{ or } \;
\mu\big(A_R(M-\xi\omega)\big) \;\leq\; \e \, \mu(B(x,R)).
$$
If the leftmost inequality holds, then $\omega$ is bounded.
Suppose instead that the rightmost inequality holds. Applying Lemma \ref{lemma_iteration}, there exists $\e_0 > 0$, depending only on $p, Q, M, \d$ such that each $\xi$ satisfying the rightmost inequality, with $\e=\e_0$, either satisfies the estimate
\begin{equation}
\label{eq_u_bound}
\begin{split}
u \;\leq\; M - \frac{\xi \omega}{2}
\end{split}
\end{equation}
$\mu$-a.e.\ on $B(x,R/2)$ or $\omega$ is again bounded.  Equation \eqref{eq_u_bound} and the elementary inequality $(-\inf_{B(x,R)} u) \leq (-m)$ imply that
$$
\mathop{\osc}_{B(x,R/2)}u \;:=\; 
\sup_{B(x,R/2)}u - \inf_{B(x,R/2)}u \;\leq\; 
M- \frac{\xi\omega}{2} - m  \;=\;
\lambda\, \omega
$$
where $\lambda := 1 - \frac{\xi}{2}$.  Replacing $2\Lambda R$ by $r_{n+1}$ and $R/2$ by $r_n$, where $r_n := (4\Lambda )^{-n}R$, we iterate the argument to obtain
\begin{eqnarray*}
\mathop{\osc}_{B(x, r_{n+1})}u &\leq&
\max\left\{ \lambda\big(\mathop{\osc}_{B(x, r_n)}u\big),\, \xi^{-1}\gamma^p r^\frac{Q\d}{p} \right\} \\ &\leq&
\max\left\{ \lambda^n\big(\mathop{\osc}_{B(y,R)}u\big),\, \xi^{-1}\gamma^p r^\frac{Q\d}{p} \right\}.
\end{eqnarray*}
Equation \eqref{eq_modcty} follows, where $\alpha$ solves $\lambda^n = \big(r_n/R\big)^\alpha  = (4\Lambda )^{-n\alpha}$.
\end{proof}

\section{Harnack Inequalities for Quasi-Minimizers} \label{sect_harnack}

As a consequence of H\"older continuity, we prove a Harnack-type inequality for quasi-minimizers.  For the homogeneous case \cite[Sect 7]{KinnunenShan}, the proof of the Harnack inequality uses a covering argument in the spirit of Krylov and Safonov \cite{KrylovSafonov}.  We note that a variant of the argument is also valid in our setting.

Our approach follows the ``expansion of positivity'' technique \cite{DiBenedetto1} instead, which relies on iteration techniques as in the previous sections; see also \cite{DiBenedetto}.  We begin with a version of the density theorems from previous sections.

\begin{lemma}[Expansion of positivity] \label{lemma_expansion}
If $u \in DG_p(\Omega)$ with $u>0$, and if $h > 0$ satisfies the density condition
$$
\mu\big(A_R(h)\big) \;\geq \; \frac{1}{2}\mu(B(x,R)),
$$
then there exists $\xi\in(0,1)$ so that
\begin{eqnarray*}
\textrm{either } \quad h &\leq& \xi^{-1} \gamma \, R^{(Q'\d)/p} \\
\textrm{ or } \qquad u &\geq & \xi h \;\qquad  \mu\textrm{-a.e.\ on } \; B(x,2R).
\end{eqnarray*}
\end{lemma}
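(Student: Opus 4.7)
My plan is to follow the standard expansion-of-positivity strategy: first promote the measure-theoretic density hypothesis to a pointwise lower bound on the half-ball $B(x,R/2)$, and then propagate that bound to $B(x,2R)$ by re-applying the same argument on a concentric enlargement.

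\textbf{Step 1 (pointwise lower bound on $B(x,R/2)$).} I would apply the discrete isoperimetric inequality (Lemma \ref{lemma_discreteiso}) to $-u$ at the geometrically decreasing levels $h_n := h/2^n$ on the fixed ball $B(x,R)$. The hypothesis $\mu(A_R(h)) \geq \tfrac{1}{2}\mu(B_R)$ gives $\mu(D_R(h_0)) \leq \tfrac{1}{2}\mu(B_R)$, and the telescoping argument from the proof of Lemma \ref{lemma_getdensity}, adapted to sub-level sets (using the pointwise bound $(u-h_n)_- \leq h_n$, which comes from the assumption $u>0$), yields, for every prescribed $\e > 0$, an integer $N = N(\e)$ such that, with $\xi_0 := 2^{-N}$, either
\[
\mu\bigl(D_R(\xi_0 h)\bigr) \;\leq\; \e\,\mu(B_R),
\]
or else $h \leq \xi_0^{-1}\gamma R^{(Q'\d)/p}$, which is already the conclusion of the lemma. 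Choosing $\e$ equal to the constant $\e_0$ from Lemma \ref{lemma_iteration} and running a De Giorgi iteration on the sub-level sets of $u$ at levels decreasing from $\xi_0 h$ to $\xi_0 h / 2$ (using the Caccioppoli-type inequality for $(u-k)_-$ stated right after Lemma \ref{lemma_caccioppoli}, together with Lemma \ref{lemma_fastconv}) then produces either $u \geq \xi_0 h / 2$ $\mu$-a.e.\ on $B(x,R/2)$, or once again the alternative $h \lesssim \xi_0^{-1}\gamma R^{(Q'\d)/p}$.

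\textbf{Step 2 (propagation to $B(x,2R)$).} For an arbitrary $y \in B(x,2R)$, the ball $B(y,4R)$ contains $B(x,R/2)$ since $d(x,y) + R/2 \leq 5R/2 \leq 4R$. By the doubling property,
\[
\mu\bigl(\{z \in B(y,4R) : u(z) \geq \tfrac{\xi_0 h}{2}\}\bigr) \;\geq\; \mu\bigl(B(x,R/2)\bigr) \;\geq\; c_0\,\mu\bigl(B(y,4R)\bigr)
\]
for some $c_0 = c_0({c_\mu}) > 0$. Re-running Step 1 on $B(y,4R)$ at the level $\xi_0 h /2$ and with density $c_0$ in place of $1/2$ yields a new parameter $\xi_1 \in (0,1)$, depending only on $c_0$ and the structural constants, so that either $\xi_0 h/2 \leq \xi_1^{-1} \gamma (4R)^{(Q'\d)/p}$ (absorbed into the first alternative) or $u(y) \geq \xi_1 \xi_0 h / 2$. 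Since $y$ was arbitrary in $B(x,2R)$, the choice $\xi := \xi_0\xi_1/2$ delivers the desired bound on all of $B(x,2R)$.

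\textbf{Main obstacle.} The principal difficulty is the bookkeeping of alternatives: each invocation of Steps 1--2 spawns its own dichotomy ``$h$ is small, or a pointwise bound holds,'' and these must all consolidate into the single alternative $h \leq \xi^{-1}\gamma R^{(Q'\d)/p}$ of the lemma. A related subtlety is that Step 2 takes place on a ball of radius $4R$, so its nonhomogeneous tail scales like $(4R)^{(Q'\d)/p}$; since $(Q'\d)/p > 0$, this discrepancy is a uniform constant absorbable into $\xi$. The doubling property supplies a single uniform density $c_0$ at every $y \in B(x,2R)$, which is precisely what enables the one-step expansion without any explicit chaining argument.
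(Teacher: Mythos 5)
Your argument is sound and runs on the same two engines as the paper's proof---the telescoping estimate behind Lemma~\ref{lemma_getdensity} (via Lemma~\ref{lemma_discreteiso} applied to sub-level sets) and the De Giorgi iteration behind Lemma~\ref{lemma_iteration}, both used with the purely numerical parameters $m=0$ and $\omega\sim h$, which is legitimate exactly because $u>0$---but you organize the expansion differently. The paper makes a single pass on the concentric ball $B(x,4R)$: by doubling, the hypothesis $\mu(A_R(h))\geq\tfrac12\mu(B(x,R))$ yields $\mu(D_{4R}(h))\leq\big(1-\tfrac12 c_\mu^{-2}\big)\mu(B(x,4R))$, and then the getdensity and iteration lemmas (with $m=0$, $\omega=2h$) applied on $B(x,4R)$ give $u\geq\xi h$ directly on $\tfrac12 B(x,4R)=B(x,2R)$, with one dichotomy. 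Your two-stage scheme---first a pass on $B(x,R)$ giving positivity on $B(x,R/2)$, then, for each $y\in B(x,2R)$, a transfer of density to $B(y,4R)$ by doubling and a second pass---buys nothing extra and costs a second dichotomy plus a covering remark: the second pass gives $u\geq\xi_1\xi_0 h/2$ only $\mu$-a.e.\ on $B(y,2R)$, not at the individual point $y$, so you must either invoke the continuous representative from Theorem~\ref{thm_holder} or combine the a.e.\ bounds over a countable dense set of centers; alternatively, take $y=x$, at which point your Step 2 alone reproduces the paper's one-shot argument on the enlarged concentric ball and Step 1 becomes redundant. Since $\xi_1$ depends only on $c_\mu$ and the structural data, the two alternatives do consolidate uniformly as you claim, so the needed fixes are presentational rather than substantive.
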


\begin{proof}
Combining the doubling condition and the above hypothesis, we obtain
$$
\mu(B(x,4R)) \;\leq\;
c_\mu^2 \, \mu(B(x,R)) \;\leq\;
2c_\mu^2 \, \mu\big(A_R(h)\big) \;\leq\;
2c_\mu^2 \, \mu\big(A_{4R}( h)\big).
$$
Putting $\theta = 1 - \frac{1}{2}c_\mu^{-2}$, we further obtain the density condition
$$
\mu\big(D_{4R}(h)\big) =\mu(B(x,4R))-\mu\big(A_{4R}( h)\big)\;\leq\;
\theta \, \mu(B(x,4R)).
$$
Observe that the proof of Lemma~\ref{lemma_getdensity} uses $m$ and $\omega$ only as numerical parameters. We therefore use a similar argument with $m=0$ and $\omega=2h$ and with $B(x,4R)$ in place of $B(x,R)$. This implies that
for each $\e >0$ there exists $\xi \in (0,1)$ so that
\begin{eqnarray*}
\textrm{either } \quad \qquad h &\leq&
\xi^{-1} \gamma\, R^{(Q'\d)/p} \\
\textrm{or } \quad
\e \mu(B_R) &\leq& \mu(D_R(2\xi h)).
\end{eqnarray*}
Similarly the proof of Lemma~\ref{lemma_iteration} remains valid under the same change of parameters, thus completing the proof.
\end{proof}

We now arrive at the Harnack inequality, and the proof is in two parts.  In Part (1) we use H\"older continuity to obtain an initial density estimate for $u \in DG_p(\Omega)$ in a smaller ball.  In Part (2) the density estimate allows us to iterate Lemma \ref{lemma_expansion} to prove Harnack's inequality and expand its validity to the original ball.  One technical difficulty is that the constants in the inequality are increasing with each iteration.  To overcome this, we choose the radius of the smaller ball, and thus the number of iterations, according to the supremum.  To make our choices explicit, we use an auxiliary (radial) function.
\begin{thm} \label{thm_harnack}
Let $u \in DG_p(\Omega)$ with $u > 0$. Then there exist $C, c > 0$, depending
only on Standing Hypotheses \ref{standhyp} and the parameters in $DG_p(\Om)$  so that 
$$
\sup_{B} u  \;\leq\;
C\inf_{B} u \;+\; c\, R^{(Q'\d)/p}
$$
for all balls $B = B(x_0,R_0)$ in $X$ so that $4B \subset \Omega$.
\end{thm}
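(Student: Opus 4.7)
The plan is to combine the H\"older continuity of Theorem~\ref{thm_holder} with repeated applications of the expansion of positivity (Lemma~\ref{lemma_expansion}), in the style of DiBenedetto. Set $M := \sup_B u$, $m := \inf_B u$, and $\theta := (Q'\delta)/p$. Local boundedness (Lemma~\ref{lemma_weakharnack}) applied on $4B \subset \Omega$ guarantees $M < \infty$ and furnishes a crude bound $\sup_{2B} u \lesssim M + R_0^\theta$. If $M \leq c_\ast R_0^\theta$ for a universal constant $c_\ast$ to be fixed below, the asserted inequality is immediate with $C = 1$, so I may assume $M > c_\ast R_0^\theta$ throughout.

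For the initial density estimate, I pick $x_M \in B$ with $u(x_M) \geq \tfrac{3}{4} M$, using the H\"older continuous representative of $u$. I then apply the oscillation estimate~\eqref{eq_modcty} to the concentric pair $B(x_M, \rho) \subset B(x_M, R_0) \subset 2B$:
$$
\osc_{B(x_M, \rho)} u \;\leq\; C \max\Big\{ (\osc_{B(x_M, R_0)} u) \Big(\frac{\rho}{R_0}\Big)^\alpha,\; \gamma^p \rho^\theta \Big\}.
$$
Bounding the outer oscillation crudely by $\sup_{2B} u$ and choosing $\rho = \kappa R_0$ for a small universal $\kappa$, the first term is controlled by $M/8$; the assumption $M > c_\ast R_0^\theta$ absorbs the second term below $M/8$ as well. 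Hence $\osc_{B(x_M, \rho)} u \leq M/4$, and since $u(x_M) \geq 3M/4$, it follows that $u \geq M/2$ pointwise on $B(x_M, \rho)$, so the density condition for the level $M/2$ is trivially satisfied on this ball.

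For the iteration, put $\rho_j := 2^j \rho$ and $h_j := \xi^j (M/2)$, where $\xi \in (0,1)$ is the constant from Lemma~\ref{lemma_expansion}. Inductively, if $u \geq h_j$ on $B(x_M, \rho_j)$, then the density at level $h_j$ on that ball is full, and Lemma~\ref{lemma_expansion} yields either the bad alternative $h_j \leq \xi^{-1} \gamma \rho_j^\theta$ or the good alternative $u \geq h_{j+1}$ on $B(x_M, \rho_{j+1})$. I iterate until $\rho_{j^\ast} \geq 2 R_0$, so that $B \subset B(x_M, \rho_{j^\ast})$; since $\rho/R_0 = \kappa$ is universal, the integer $j^\ast$ is likewise universal. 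Provided the bad alternative never triggers, the outcome is $u \geq \xi^{j^\ast} (M/2)$ on $B$, i.e., $M \leq 2\xi^{-j^\ast} m$, which is the Harnack inequality modulo the nonhomogeneous term.

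The principal obstacle is choosing $c_\ast$ so that the bad alternative is avoided at every step $j = 0, 1, \ldots, j^\ast$. Because $h_j$ decays geometrically while $\rho_j^\theta$ grows geometrically in $j$, the binding constraint occurs at $j = j^\ast$ and reads $M > 2\xi^{-j^\ast - 1} \gamma \rho_{j^\ast}^\theta \asymp \gamma R_0^\theta$, with proportionality constant depending only on universal data. Fixing $c_\ast$ at least this large reconciles the two regimes $M \leq c_\ast R_0^\theta$ and $M > c_\ast R_0^\theta$ into a single estimate $\sup_B u \leq C \inf_B u + c R_0^\theta$. Admissibility of Lemma~\ref{lemma_expansion} at each step follows from the inclusion $B(x_M, \rho_{j^\ast}) \subset B(x_0, 3R_0) \subset 4B \subset \Omega$, valid since $x_M \in B$ and $\rho_{j^\ast} \asymp R_0$.
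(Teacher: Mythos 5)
There is a genuine gap at the very first step, and it is exactly the difficulty that the paper's auxiliary-function device is designed to overcome. You claim that Lemma~\ref{lemma_weakharnack} ``furnishes a crude bound $\sup_{2B} u \lesssim M + R_0^{(Q'\d)/p}$'' with $M=\sup_B u$, and you then use it to bound $\osc_{B(x_M,R_0)}u$ by a multiple of $M$ so that the first term in \eqref{eq_modcty} can be made $\leq M/8$ by shrinking $\kappa$. But Lemma~\ref{lemma_weakharnack} bounds a supremum by an \emph{$L^p$-average over a strictly larger ball} (plus the nonhomogeneous term); it never bounds $\sup_{2B}u$ by $\sup_B u$. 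Indeed $\sup_{2B}u\lesssim\big(\dashint_{4B}u_+^p\,d\mu\big)^{1/p}+\gamma R_0^{(Q'\d)/p}$, and the average over $4B$ sees values of $u$ outside $B$ that are in no way controlled by $\sup_B u$: a positive function in $DG_p$ can be small on $B$ and large on $2B\setminus B$. A bound of the form $\sup_{2B}u\leq C\sup_B u+\dots$ for positive quasi-minimizers is essentially a consequence of the Harnack inequality itself, so invoking it here makes the argument circular. Without it, your choice of $\kappa$ (and hence the number of expansion steps $j^\ast$ and the final constant) would have to depend on the uncontrolled ratio $\sup_{2B}u/\sup_B u$, i.e., on $u$.

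The paper breaks this circle differently: for $v=u/u(x)$ it introduces $M(r)=\sup_{B(x,r)}v$ and the auxiliary radial function $N(r)=(1-r/R_0)^{-\beta}$, and takes the \emph{largest} root $r_0$ of $M(r)=N(r)$. Maximality of $r_0$ yields $\sup_{B(y_0,R)}v\leq M\big(\tfrac{R_0+r_0}{2}\big)\leq N\big(\tfrac{R_0+r_0}{2}\big)=2^\beta N(r_0)=2^\beta v(y_0)$ with $R=\tfrac{R_0-r_0}{2}$, which is precisely the ``supremum on the larger ball controlled by the value at the point'' estimate that your proposal asserts without justification. The price is that the starting radius $\rho=\e R$ and hence the number of expansion-of-positivity iterations now depend on $r_0$; the paper compensates by choosing $\beta$ with $\xi 2^\beta=1$, so that the growth of $N(r_0)$ exactly offsets the factor $\xi^{n}$ accumulated over the $r_0$-dependent number of iterations, as in \eqref{eq_independent_bound}. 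Your iteration scheme (doubling radii, levels $\xi^j M/2$, absorbing the bad alternative into the additive term $cR^{(Q'\d)/p}$ via the threshold $c_\ast$) is fine in outline and matches Part (2) of the paper, but to make the proof complete you must either import the largest-root/auxiliary-function argument (or a Krylov--Safonov type covering argument) to obtain the initial density estimate with constants independent of $u$; there is also a minor bookkeeping issue in that stopping at $\rho_{j^\ast}\geq 2R_0$ only gives $B(x_M,\rho_{j^\ast})\subset 5B$ rather than $3B$, so the containment in $\Omega$ needs slightly more care.
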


\begin{proof}
{\bf Part (1): Pointwise estimates}.
Let $u \in DG_p(\Omega;C,\gamma,\d)$ be given.  For each $x\in B(x_0,R_0)$, consider the function
$$
v \;=\; \frac{u}{u(x)}.
$$
Clearly we have $v \in DG_p(\Omega;C,\Gamma,\d)$, where $\Gamma := 
\big(u(x)\big)^{-1} \gamma$.  Next, define
$$
M(r) \;:=\; \sup_{B(x,r)}v \; \quad \trm{and}\quad  \;
N(r) \;:=\; (1 - r/R_0)^{-\beta}
$$
with 
$\beta > 0$ to be chosen later.  Since $v$ is continuous (Theorem \ref{thm_holder}), we have the identity $M(0)=1= N(0)$ as well as the inequality
$$
\lim_{r \nearrow R_0} M(r) \;<\; \infty \;=\;
\lim_{r \nearrow R_0} N(r),
$$
so there must be a largest root $r_0 > 0$ of the equation $M(r)=N(r)$. The advantage of using the auxiliary function $N(r)$ is that it gives an explicit dependence between the radius and the supremum. This is useful in \eqref{eq_independent_bound} where, after fixing $\beta$, we see that the constant remains under control in iteration.

To continue, there exists $y_0 \in \overline{B}(x,r_0)$ at which $v$ attains the supremum
\begin{equation}
\label{eq_N=M}
\begin{split}
v(y_0) \;=\; \sup_{B(x,r_0)}v= M(r_0) \;=\; N(r_0) \;=\; (1 - r_0/R_0)^{-\beta}.
\end{split}
\end{equation}
For
$$
R := \frac{R_0-r_0}{2},
$$ the triangle inequality gives
$$
d(x,y_0) + R \;\leq\;
r_0+\frac{1}{2}(R_0-r_0) \;=\;
\frac{1}{2}(R_0+r_0),
$$
and thus, because  $r_0$ is the largest root, we obtain the estimate
\begin{eqnarray*}
\sup_{B(y_0,R)} v \;\leq\;
\sup_{B\left(x,\frac{R_0+r_0}{2}\right)} v &=&
M\Big(\frac{R_0+r_0}{2}\Big) \;\leq\;
N\Big(\frac{R_0+r_0}{2}\Big) \\ &=&
\Big(\frac{R_0-r_0}{2R_0}\Big)^{-\beta} \;=\;
{2^\beta}N(r_0).
\end{eqnarray*}
Applying Theorem \ref{thm_holder} again together with the above inequality, we have, for each $ \rho  \in (0,R)$ and each $y \in B(y_0, \rho )$,
\[
\begin{split}
v(y) - v(y_0) \;\geq\;
-\osc_{B(y_0, \rho )} v \;\geq\;&
-C\Big[ \big( \sup_{B(y_0,R)}v \,-\, \inf_{B(y_0,R)}v\big) \Big(\frac{ \rho }{R}\Big)^\alpha \,+\,  \rho ^{(Q'\d)/p}\Big] \\
\;\geq\;& -C\Big[
{2^\beta}N(r_0)\Big(\frac{ \rho }{R}\Big)^\alpha \,+\,  \rho ^{(Q'\d)/p}\Big].
\end{split}
\]
We now set $ \rho  := \e R$, and choose $\e  > 0$ sufficiently small, so that
$$
C\Big[ {2^\beta}N(r_0)\e^\alpha \,+\, (\e R)^{(Q'\d)/p} \Big] \;\leq\;
\frac{1}{2}N(r_0),
$$
where we estimated
\[
\begin{split}
(\e R)^{(Q'\d)/p}=\left(\frac{\e(R_0-r_0)}{2}\right)^{(Q'\d)/p} \;\leq\;
\frac{1}{4} \left(\frac{R_0-r_0}{R_0}\right)^{-\beta}= \frac{1}{4}N(r_0)
\end{split}
\]
for the second term on the left hand side. Notice that $\e$ depends on $\beta$, but can be chosen independently of $r_0$.
This together with \eqref{eq_N=M} implies
\[
\begin{split}
v(y) - v(y_0) \;\geq\; &
-C\Big[ {2^\beta}N(r_0)\Big(\frac{ \rho }{R}\Big)^\alpha \,+\,  \rho ^{(Q'\d)/p} \Big]
\\ \;\geq\;&
-C\Big[ {2^\beta}N(r_0)\e^\alpha \,+\, (\e R)^{(Q'\d)/p} \Big]
\;\geq\;
-\frac{1}{2}N(r_0) \;=\; -\frac{1}{2}v(y_0),
\end{split}
\]
which further implies the pointwise estimate
$$
v(y) \;\geq\; \frac{1}{2}v(y_0) \;=\; \frac{1}{2}\sup_{B(x,r_0)}v \;=:\; h
$$
for $\mu$-a.e.\ $y \in B(y_0, \rho )$.
This implies the density condition
\[
\begin{split}
\mu(A_\rho(h)) \;\geq\; \half  \mu(B(y_0,\rho)).
\end{split}
\]

{\bf Part (2): Expansion of positivity}.
We now apply Lemma~\ref{lemma_expansion}, so there exists a constant $\xi \in (0,1)$, depending only on the parameters of Standing Hypotheses \ref{standhyp} and Structure Conditions \ref{structcond}, such that
\begin{eqnarray*}
\textrm{either } \quad h &\leq& \xi^{-1} \Gamma \,  \rho ^{(Q'\d)/p} \\
\textrm{ or } \qquad v &\geq & \xi h \;\qquad  \mu\textrm{-a.e.\ on } \; B(y_0,2\rho).
\end{eqnarray*}
The second inequality implies the modified density condition
\[
\begin{split}
\mu(A_{2\rho}(\xi h))\geq \half  \mu(B(y_0,2\rho)),
\end{split}
\]
and thus we can iterate Lemma~\ref{lemma_expansion}. If the first alternative occurs, we get the desired bound, and if the second alternative occurs for $n-1$ times,  we have
\begin{eqnarray*}
\textrm{either } \quad \xi^nh  &\leq& \Gamma \,  (2^n\rho) ^{(Q'\d)/p} \\
\textrm{ or } \qquad v &\geq & \xi^n h \;\qquad  \mu\textrm{-a.e.\ on } \; B(y_0,2^n\rho)
\end{eqnarray*}
on the $n$th round.
For sufficiently large $n$, we have $B(x,4R_0)\subset B(y_0,2^n\rho)$.  In either case, we obtain
\[
\begin{split}
\xi^n h &\;\leq\;
\max\left\{\inf_{B(x_0, 4R_0)} v,\, \Gamma \,  (2^n\rho) ^{(Q'\d)/p}\right\} \\ & \;\leq\;
\max\left\{\inf_{B(x_0, R_0)} v,\, \Gamma \,  (2^n\rho) ^{(Q'\d)/p}\right\}.
\end{split}
\]
Finally, we estimate $\xi^nh$ from below by a constant depending only on data by utilizing the auxiliary function.  First, we choose $n \in \N$ so that
\[
\begin{split}
2^{n-1}\rho \;\leq\; 4 R_0 \leq 2^n \rho \;=\; 2^n \eps \frac{R_0-r_0}{2}
\end{split}
\]
so that
\[
\begin{split}
\frac{8R_0}{\eps (R_0-r_0)} \leq 2^n.
\end{split}
\]
We now choose $\beta$ so that $\xi 2^\beta=1$, from which it follows that
\begin{equation}
\label{eq_independent_bound}
\begin{split}
\xi^nh \;=\;
2^{-\beta n}h \;\geq\;
\left(\frac{8 R_0}{\eps (R_0-r_0)}\right)^{-\beta}\half (1-\frac{r_0}{R_0})^{-\beta} \;=\;
2^{3\beta-1}\eps^{\beta} \;=:\; C
\end{split}
\end{equation}
and therefore we obtain the estimate
\begin{eqnarray*}
C \;\leq\; \xi^n h &\leq&
\max\left\{\inf_{B(x_0, R_0)} v,\, \Gamma (2^n\rho) ^{(Q'\d)/p}\right\} \\ &\leq&
\max\left\{\inf_{B(x_0, R_0)} \frac{u}{u(x)},\, \frac{\gamma(2R_0) ^{(Q'\d)/p}}{u(x)} \right\} \\
C u(x) &\leq&
\max\left\{\inf_{B(x_0, R_0)} u,\, \gamma2^{(Q'\d)/p}R_0^{(Q'\d)/p}\right\}
\end{eqnarray*}
Taking suprema over all $x \in B$, the theorem follows.
\end{proof}

\bibliographystyle{alpha}
\bibliography{degiorgi-final}

\begin{thebibliography}{DGV08}

\bibitem[BM06]{BjornMarola}
Anders Bj{\"o}rn and Niko Marola.
\newblock Moser iteration for (quasi)minimizers on metric spaces.
\newblock {\em Manuscripta Math.}, 121(3):339--366, 2006.

\bibitem[BP99]{BourdonPajot}
Marc Bourdon and Herv{\'e} Pajot.
\newblock Poincar\'e inequalities and quasiconformal structure on the boundary
  of some hyperbolic buildings.
\newblock {\em Proc. Amer. Math. Soc.}, 127(8):2315--2324, 1999.

\bibitem[Bus82]{Buser}
Peter Buser.
\newblock A note on the isoperimetric constant.
\newblock {\em Ann. Sci. \'Ecole Norm. Sup. (4)}, 15(2):213--230, 1982.

\bibitem[Che99]{Cheeger}
Jeff Cheeger.
\newblock Differentiability of {L}ipschitz functions on metric measure spaces.
\newblock {\em Geom. Funct. Anal.}, 9(3):428--517, 1999.

\bibitem[DG57]{DeGiorgi}
Ennio De~Giorgi.
\newblock Sulla differenziabilit\`a e l'analiticit\`a delle estremali degli
  integrali multipli regolari.
\newblock {\em Mem. Accad. Sci. Torino. Cl. Sci. Fis. Mat. Nat. (3)}, 3:25--43,
  1957.

\bibitem[DGV08]{DiBenedettoGianazzaVespri}
Emmanuele DiBenedetto, Ugo Gianazza, and Vincenzo Vespri.
\newblock Harnack estimates for quasi-linear degenerate parabolic differential
  equations.
\newblock {\em Acta Math.}, 200(2):181--209, 2008.

\bibitem[DiB89]{DiBenedetto1}
Emmanuele DiBenedetto.
\newblock Harnack estimates in certain function classes.
\newblock {\em Atti Sem. Mat. Fis. Univ. Modena}, 37(1):173--182, 1989.

\bibitem[DiB10]{DiBenedetto}
Emmanuele DiBenedetto.
\newblock {\em Partial differential equations}.
\newblock Cornerstones. Birkh\"auser Boston Inc., Boston, MA, second edition,
  2010.

\bibitem[DS93]{DavidSemmes}
Guy David and Stephen Semmes.
\newblock Quantitative rectifiability and {L}ipschitz mappings.
\newblock {\em Trans. Amer. Math. Soc.}, 337(2):855--889, 1993.

\bibitem[DT84]{DiBenedettoTrudinger}
Emmanuele DiBenedetto and Neil~S. Trudinger.
\newblock Harnack inequalities for quasiminima of variational integrals.
\newblock {\em Ann. Inst. H. Poincar\'e Anal. Non Lin\'eaire}, 1(4):295--308,
  1984.

\bibitem[GG82]{GiaquintaGiusti82}
Mariano Giaquinta and Enrico Giusti.
\newblock On the regularity of the minima of variational integrals.
\newblock {\em Acta Math.}, 148:31--46, 1982.

\bibitem[GG84]{GiaquintaGiusti84}
Mariano Giaquinta and Enrico Giusti.
\newblock Quasiminima.
\newblock {\em Ann. Inst. H. Poincar\'e Anal. Non Lin\'eaire}, 1(2):79--107,
  1984.

\bibitem[GH]{GongHajlasz}
Jasun Gong and Piotr Haj{\l}asz.
\newblock Differentiability of quasi-minimizers on metric measure spaces.
\newblock In preparation.

\bibitem[Giu03]{Giusti}
Enrico Giusti.
\newblock {\em Direct methods in the calculus of variations}.
\newblock World Scientific Publishing Co. Inc., River Edge, NJ, 2003.

\bibitem[Gro96]{Gromov}
Mikhael Gromov.
\newblock Carnot-{C}arath\'eodory spaces seen from within.
\newblock In {\em Sub-{R}iemannian geometry}, volume 144 of {\em Progr. Math.},
  pages 79--323. Birkh\"auser, Basel, 1996.

\bibitem[Haj03]{Hajlasz}
Piotr Haj{\l}asz.
\newblock Sobolev spaces on metric-measure spaces.
\newblock In {\em Heat kernels and analysis on manifolds, graphs, and metric
  spaces ({P}aris, 2002)}, volume 338 of {\em Contemp. Math.}, pages 173--218.
  Amer. Math. Soc., Providence, RI, 2003.

\bibitem[Hei01]{Heinonen}
Juha Heinonen.
\newblock {\em Lectures on analysis on metric spaces}.
\newblock Universitext. Springer-Verlag, New York, 2001.

\bibitem[HK95]{HajlaszKoskela1}
Piotr Haj{\l}asz and Pekka Koskela.
\newblock Sobolev meets {P}oincar\'e.
\newblock {\em C. R. Acad. Sci. Paris S\'er. I Math.}, 320(10):1211--1215,
  1995.

\bibitem[HK98]{HeinonenKoskela}
Juha Heinonen and Pekka Koskela.
\newblock Quasiconformal maps in metric spaces with controlled geometry.
\newblock {\em Acta Math.}, 181(1):1--61, 1998.

\bibitem[HK00]{HajlaszKoskela}
Piotr Haj{\l}asz and Pekka Koskela.
\newblock Sobolev met {P}oincar\'e.
\newblock {\em Mem. Amer. Math. Soc.}, 145(688), 2000.

\bibitem[Jia]{Jiang}
Renjin Jiang.
\newblock Lipschitz continuity of solutions of poisson equations in metric
  measure spaces.
\newblock arXiv preprint {\rm http://arxiv.org/pdf/1004.1101v1}.

\bibitem[KRS03]{KoskelaRajalaShan}
Pekka Koskela, Kai Rajala, and Nageswari Shanmugalingam.
\newblock Lipschitz continuity of {C}heeger-harmonic functions in metric
  measure spaces.
\newblock {\em J. Funct. Anal.}, 202(1):147--173, 2003.

\bibitem[KS80]{KrylovSafonov}
Nicolai~V. Krylov and Mikhail~V. Safonov.
\newblock A property of the solutions of parabolic equations with measurable
  coefficients.
\newblock {\em Izv. Akad. Nauk SSSR Ser. Mat.}, 44(1):161--175, 239, 1980.

\bibitem[KS01]{KinnunenShan}
Juha Kinnunen and Nageswari Shanmugalingam.
\newblock Regularity of quasi-minimizers on metric spaces.
\newblock {\em Manuscripta Math.}, 105(3):401--423, 2001.

\bibitem[KZ08]{KeithZhong}
Stephen Keith and Xiao Zhong.
\newblock The {P}oincar\'e inequality is an open ended condition.
\newblock {\em Ann. of Math. (2)}, 167(2):575--599, 2008.

\bibitem[Laa00]{Laakso}
Tomi~J. Laakso.
\newblock Ahlfors {$Q$}-regular spaces with arbitrary {$Q>1$} admitting weak
  {P}oincar\'e inequality.
\newblock {\em Geom. Funct. Anal.}, 10(1):111--123, 2000.

\bibitem[LU68]{LadyUral}
Olga~A. Ladyzhenskaya and Nina~N. Ural{\'c}tseva.
\newblock {\em Linear and quasilinear elliptic equations}.
\newblock Translated from the Russian by Scripta Technica, Inc. Translation
  editor: Leon Ehrenpreis. Academic Press, New York, 1968.

\bibitem[LV07]{LottVillani}
John Lott and C{\'e}dric Villani.
\newblock Weak curvature conditions and functional inequalities.
\newblock {\em J. Funct. Anal.}, 245(1):311--333, 2007.

\bibitem[M{\"a}k08]{Makalainen_nonhom}
Tero M{\"a}k{\"a}l{\"a}inen.
\newblock Removable sets for {H}\"older continuous {$p$}-harmonic functions on
  metric measure spaces.
\newblock {\em Ann. Acad. Sci. Fenn. Math.}, 33(2):605--624, 2008.

\bibitem[Mos60]{Moser1}
J{\"u}rgen Moser.
\newblock A new proof of {D}e {G}iorgi's theorem concerning the regularity
  problem for elliptic differential equations.
\newblock {\em Comm. Pure Appl. Math.}, 13:457--468, 1960.

\bibitem[Mos61]{Moser2}
J{\"u}rgen Moser.
\newblock On {H}arnack's theorem for elliptic differential equations.
\newblock {\em Comm. Pure Appl. Math.}, 14:577--591, 1961.

\bibitem[Nas58]{Nash}
John Nash.
\newblock Continuity of solutions of parabolic and elliptic equations.
\newblock {\em Amer. J. Math.}, 80:931--954, 1958.

\bibitem[Sem96]{Semmes}
Stephen Semmes.
\newblock Finding curves on general spaces through quantitative topology, with
  applications to {S}obolev and {P}oincar\'e inequalities.
\newblock {\em Selecta Math. (N.S.)}, 2(2):155--295, 1996.

\bibitem[Sha00]{Shan}
Nageswari Shanmugalingam.
\newblock Newtonian spaces: an extension of {S}obolev spaces to metric measure
  spaces.
\newblock {\em Rev. Mat. Iberoamericana}, 16(2):243--279, 2000.

\bibitem[Sha01]{Shan2}
Nageswari Shanmugalingam.
\newblock Harmonic functions on metric spaces.
\newblock {\em Illinois J. Math.}, 45(3):1021--1050, 2001.

\bibitem[Stu06]{Sturm}
Karl-Theodor Sturm.
\newblock On the geometry of metric measure spaces. {II}.
\newblock {\em Acta Math.}, 196(1):133--177, 2006.

\bibitem[Wid71]{Widman}
Kjell-Ove Widman.
\newblock H\"older continuity of solutions of elliptic systems.
\newblock {\em Manuscripta Math.}, 5:299--308, 1971.

\end{thebibliography}
\end{document}